\def\BibTeX{{\rm B\kern-.05em{\sc i\kern-.025em b}\kern-.08emalors arret
    T\kern-.1667em\lower.7ex\hbox{E}\kern-.125emX}}
\newcommand{\N}{\mathbb{N}}
\newcommand{\Z}{\mathbb{Z}}
\newcommand{\R}{\mathbb{R}}
\newcommand{\E}{\mathbb{E}}
\newcommand{\I}{\mathbb{I}}
\newcommand{\Var}{\mbox{Var}\,}
\newcommand{\Cov}{\mbox{Cov}\,}
\newcommand{\Loi}{\mathcal{L}}
\newcommand{\tend}{\overset{d}{\underset{N\rightarrow\infty}{\longrightarrow}}}
\newcommand{\ba}{\begin{eqnarray}}
\newcommand{\ea}{\end{eqnarray}}
\newcommand{\ban}{\begin{eqnarray*}}
\newcommand{\ean}{\end{eqnarray*}}
\newcommand{\be}{\begin{equation}}
\newcommand{\ee}{\end{equation}}
\def\limiteloiN{\renewcommand{\arraystretch}{0.5}
\begin{array}[t]{c}
\stackrel{{\Loi}}{\longrightarrow} \\
{\scriptstyle N\rightarrow\infty}
\end{array}\renewcommand{\arraystretch}{1}}
\def\limiteprobaN{\renewcommand{\arraystretch}{0.5}
\begin{array}[t]{c}
\stackrel{{\cal P}}{\longrightarrow} \\
{\scriptstyle N\rightarrow\infty}
\end{array}\renewcommand{\arraystretch}{1}}
\def\limitN{\renewcommand{\arraystretch}{0.5}
\begin{array}[t]{c}
\stackrel{}{\longrightarrow} \\
{\scriptstyle N\rightarrow\infty}
\end{array}\renewcommand{\arraystretch}{1}}
\def\limiteN{\renewcommand{\arraystretch}{0.5}
\begin{array}[t]{c}
\stackrel{}{\longrightarrow} \\
{\scriptstyle N\rightarrow\infty}
\end{array}\renewcommand{\arraystretch}{1}}
\newtheorem{thm}{Theorem}
\newtheorem{rem}{Remark}
\newtheorem{lem}{Lemma}
\newtheorem{prop}{Proposition}
\newtheorem{popy}{Property}
\date{}
\begin{document}

%\linenumbers

%\version

\title{Adaptive semiparametric wavelet estimator and goodness-of-fit test for long memory linear processes}
\author{\centerline{Jean-Marc Bardet$^a$ and Hatem Bibi$^a$} \\
\small {\tt bardet@univ-paris1.fr}, \small {\tt
hatem.bibi@malix.univ-paris1.fr}\\
~\\
{\small $^a$ SAMM, Universit\'e Paris 1, 
90 rue de Tolbiac, 75013 Paris, FRANCE.}
}
\maketitle
\noindent {\bf Keywords:} Long range dependence;
linear processes; wavelet estimator; semiparametric estimator; adaptive estimator; adaptive goodness-of-fit test.\\
~\\
{\bf MSC Classification:} Primary: 62M07, 62M09 Secondary 62M10; 62M15; 60F05.\\
~\\
 \begin{abstract}
This paper is first devoted to study an adaptive wavelet based estimator of the long memory parameter for linear processes in a general semi-parametric frame. This is an extension of Bardet {\it et al.} (2008) which only concerned Gaussian processes. Moreover, the definition of the long memory parameter estimator is modified and asymptotic results are improved even in the Gaussian case. Finally an adaptive goodness-of-fit test is also built and easy to be employed: it is a chi-square type test. Simulations confirm the interesting properties of consistency and robustness of the adaptive estimator and test.  
\end{abstract}
 \section{Introduction}
The long-memory processes are now a subject area well studied and often applied (see for instance the book edited by Doukhan {\it et al}, 2003). The most famous long-memory stationary time series are the fractional Gaussian noises (fGn) with Hurst parameter $H$ and FARIMA$(p,d,q)$ processes. For both these time series, the spectral density $f$ in $0$  follows a power law: $f(\lambda) \sim C\, \lambda^{-2d}$ where $H=d+1/2$ in the case of the fGn. This behavior of the spectral density is generally a definition adopted for a stationary long memory (or long range dependent) process even if this definition requires the existence of a second order moment. \\
There are a lot of statistical results relative to the estimation of the long memory parameter $d$. First and main results in this direction were obtained for parametric models with the essential papers of Fox and Taqqu (1986) and Dahlhaus (1989) for Gaussian time series, Giraitis and Surgailis (1990) for linear processes and Giraitis and Taqqu (1999) for non linear functions of Gaussian processes. \\
However and especially for numerical applications, parametric estimators are not really robust and can induce no consistent estimations. Thus, the research  is now rather focused on semiparametric estimators of the $d$. Different approaches were considered: the famous and seminal R/S statistic (see Hurst, 1951), the log-periodogram estimator (see Moulines and Soulier, 2003), the local Whittle estimator (see Robinson, 1995) or the wavelet based estimator (see Veitch {\it et al}, 2003, Moulines {\it et al}, 2007 or Bardet {\it et al}, 2008). All these estimators require the choice of an auxiliary parameter (frequency bandwidth, scales,...) but adaptive versions of original estimators are generally built for avoiding this choice. In a general semiparametric frame, Giraitis {\it et al} (1997) obtained the asymptotic lower bound for the minimax risk of estimating $d$, expressed as a function of the second order parameter of the spectral density expansion around $0$. Thus, several adaptive semiparametric are proved to follow an oracle property up to multiplicative logarithm term. But simulations (see for instance Bardet {\it et al}, 2003 or 2008) show that the most accurate estimators are local Whittle, global log-periodogram or wavelet based estimators. \\
~\\
The use of a wavelet based estimator for estimating $d$ was first proposed in Abry {\it et al.} (1998) after preliminary studies devoted to selfsimilar processes. Then Bardet {\it et al.} (2000) provided proofs of the consistency of such an estimator in a Gaussian semiparametric frame. Moulines {\it et al.} (2007) improved these results, proved a central limit theorem for the estimator of $d$ and showed that this estimator is rate optimal for the minimax criterion. Finally, Roueff and Taqqu (2009a) established similar results in a semiparametric frame for linear processes. All these papers were obtained using a wavelet analysis based on a discrete multi-resolution wavelet transform, which notably allows to compute the wavelet coefficients with the fast Mallat's algorithm. However, there remains a gap in these papers: in a semiparametric frame the ``optimal'' scale used for the wavelet analysis is depending on the second order expansion of the spectral density around $0$ frequency and these papers consider that the power of the second order expansion is known while this is unknown in practice. Two papers proposed a method for automatically selecting this ``optimal'' scale in the Gaussian semiparametric frame. Firstly, Veitch {\it et al.} (2003) using a kind of Chi-square test which provides convincing numerical results but the consistency of this procedure is not established. Secondly, Bardet {\it et al.} (2008) proved the consistency of a procedure for choosing optimal scales based on the detection of the ``most linear part'' of the log-variogram graph. In this latter article, the ``mother'' wavelet is not necessary associated to a multi-resolution analysis: the time consuming is clearly more important but a large choice of continuous wavelet transforms can be chosen and the choice of scales is not restricted to be a power of $2$. \\
~\\
The present article is devoted to an extension of the article Bardet {\it et al.} (2008). Three main improvements are obtained: 
\begin{enumerate}
\item The semiparametric Gaussian framework of Bardet {\it et al.} (2008) is extended to a semiparametric framework for linear processes. The same automatic procedure of the selection of the optimal scale is also applied and this leads to adaptive estimators. 
\item As in Bardet {\it et al.} (2008), the ``mother'' wavelet is not restricted to be associated to a discrete multi-resolution transform. Moreover we modified a little the definition of the sample variance of wavelet coefficients (variogram). The result of both these positions is a multidimensional central limit theorem satisfied by the logarithms of variograms with an extremely simple asymptotic covariance matrix (see (\ref{cov})) only depending on $d$ and the Fourier transform of the wavelet function. Hence it is easy to compute an adaptive pseudo-generalized least square estimator (PGLSE) of $d$ which is proved to satisfy a CLT with with an asymptotic variance smaller than the one of the adaptive (respectively non-adaptive) ordinary least square estimator of $d$ respectively considered  in Bardet {\it et al.} (2008) and Roueff and Taqqu (2009). Simulations confirm confirm the good performance of this PGLSE. 
\item Finally, an adaptive goodness-of-fit test can be built from this PGLSE. It consists in a normalized sum of the squared PGLS-distance between between the PGLS-regression line and the points. We prove that this test statistic converges in distribution to a chi-square distribution. Thus it is a very simple test to be computed since the asymptotic covariance matrix is easy to be approximated. When $d>0$ this test is a long memory test. Moreover, simulations show that this test provides good properties of consistency under $H_0$ and reasonable properties of robustness under $H_1$.  
\end{enumerate}
For all these reasons, we can say that this paper is an achievement of the article Bardet {\it et al.} (2008). Moreover, the adaptive PGLS estimator and test represent an interesting extension of the paper Roueff and Taqqu (2009). \\
~\\
We organized the paper as follows. Section \ref{CLT} contains the assumptions, definitions and a first multidimensional central limit theorem, while Section \ref{Adapt} is devoted to the construction and consistency of the adaptive PGLS estimator and goodness-of-fit test. In Section \ref{Simu} we illustrate with Monte-Carlo simulations the convergence of the adaptive estimator and we compare these results to those obtained with other efficient semiparametric estimators; moreover we study the consistency and robustness properties of the adaptive goodness-of-fit test. The proofs are provided in Section \ref{Proofs}.
%%of $\LL^2(\R)$
\section{A central limit theorem for the sample variance of wavelet coefficients}\label{CLT}
For $d<1/2$ and $d'>0$, this paper deals with  the following
semi-parametric framework:\\
~\\
{\bf Assumption A$(d,d')$:} {\it  $X=(X_{t})_{t\in \mathbb{Z}}$ is a zero mean stationary linear process, {\it i.e.}
$$
X_{t}=\sum_{s\in \mathbb{Z}}\alpha(t-s)\xi_{s},\quad t\in \mathbb{Z},\quad \mbox{where}
$$         
\begin{itemize}
\item  $(\xi_{s})_{s\in \Z}$ is a sequence of independent identically distributed random variables such that the distribution of $\xi_0$ is symmetric, {\it i.e.} $\Pr (\xi_0>M)=\Pr (\xi_0<-M)$ for any $M\in \R$, and $\E |\xi_0|^4 <\infty$ with $\E \xi_0=0$, $\Var \xi_0=1$ and $\mu_4:=\E \xi_0^4$; 
\item  $(\alpha(t))_{t\in \Z}$ is a sequence of real numbers such that there exist $c_d>0$ and $c_{d'} \in \R$ satisfying 
\begin{equation}\label{alpha}
|\widehat{\alpha}(\lambda)|^2=\frac 1 {\lambda^{2d}} \big (c_d+ c_{d'} \lambda^{d'}(1+\varepsilon(\lambda)) \big ) \quad \mbox{for any}\quad 
\lambda \in [-\pi, \pi],  
\end{equation}
where $\widehat \alpha (\lambda):= \frac 1 {2\pi} \, \sum_{k \in \Z} \alpha(k) e^{-ik \lambda}$, with $\varepsilon(\lambda)\to 0$ ($\lambda \to 0$).          %$\sum_{t\in      \mathbb{Z}}\alpha^{2}(t)<\infty$;
\end{itemize} }
~\\
As a consequence, the spectral density $f$ of $X$ is such that 
\begin{eqnarray}\label{dens}
f(\lambda)=2\pi\,|\widehat{\alpha}(\lambda)|^{2}
\end{eqnarray}
and satisfies the same kind of expansion than (\ref{alpha}). Thus, if $d\in (0,1/2)$ the process $X$ is a long-memory process, and if $d\leq 0$ a short memory process (see Doukhan {\it
et al.}, 2003).\\
~\\
Now define $\psi:\R \to \R$ the wavelet function. Let $k\in \N^*$. We consider the following assumption on $\psi$:\\
~\\
{\bf Assumption $\Psi(k)$:} {\it $\psi:\R \to \R$ is such that
\begin{enumerate}
\item the support of $\psi$ is included in $(0,1)$;
\item $\displaystyle \int_0^1  \psi(t)\,dt=0$;
\item $\psi \in {\cal C}^k(\R)$.
\end{enumerate}}
\noindent Straightforward implications of these assumptions are $\psi^{(j)}(0)=\psi^{(j)}(1)=0$ for any $0\leq j \leq k$ and $\widehat \psi (u) \sim C \, u^k$ $(u \to 0)$ with $C$ a real number not depending on $u$. \\
Define also the Fourier transform of $\psi$, {\it i.e.} $\widehat \psi (u):=\int_0^1
\psi(t)\,e^{-iut}dt$. Assumption Assumption $\Psi(k)$ also implies that $\widehat{\psi}$ has a fast decay at infinity.\\
~\\
If $Y=(Y_t)_{t\in \R}$ is a continuous-time process, for $(a,b)\in
\R_+^*\times \R$, the "classical" wavelet coefficient $d(a,b)$ of
the process $Y$ for the scale $a$ and the shift $b$ is
%\begin{eqnarray}\label{coeff_d}
$d(a,b) :=\frac 1{\sqrt a} \int_{\R}  \psi(\frac{t-b}{a})Y_t \,dt.$
%\end{eqnarray}
However, a process $X$ satisfying Assumption A$(d,d')$ is a discrete-time process, and from a path $(X_1,\ldots,X_N)$ we  define the
wavelet coefficients of $X$ by 
\begin{eqnarray}\label{coeff_e}
e_N(a,b) := \frac{1}{\sqrt{a}}\sum_{t=1}^{N}X_{t}\psi(\frac{t-b}{a}) =\frac{1}{\sqrt{a}}\sum_{t=1}^{N}\sum_{s\in\mathbb{Z}}\alpha(t-s)\psi(\frac{t-b}{a})\xi_{s}
\end{eqnarray}
for $(a,b)\in \N_+^*\times \Z$. Then,
\begin{popy}\label{cor1}
Under Assumption A$(d,d')$ with $d<1/2$ and $d'>0$, and if $\psi$ satisfies Assumption $\Psi(k)$ with $k>d'$,  
then $(e(a,k))_{b\in \{1,\ldots,N-a\}}$ is a zero mean  stationary linear process and
\begin{eqnarray}\label{equiD'}
\E (e^{2}(a,0)) = 2\pi \, c_d \, \Big ( K_{(\psi,2d)} \, a^{2d}+\frac {c_{d'}}{c_d}
K_{(\psi,2d-d')} \, a^{2d-d'}\Big )+ o\big (a^{2d-d'}\big
)\quad \mbox{when} \quad a\to \infty,
\end{eqnarray}
with $K_{(\psi,D)}$ such that
\begin{eqnarray}\label{Kpsi}
K_{(\psi,\alpha)}:=\int_{-\infty}^\infty |\widehat \psi (u)|^2 \,
|u|^{-\alpha}du>0\quad \mbox{for all $\alpha <1$}.
\end{eqnarray} 
\end{popy}
\noindent The proof of this property, like all the other proofs, is provided in Section \ref{Proofs}. \\
%From Property \ref{cor1}, there exists $M'>0$ satisfying for all $a\in \N^*$,
%\begin{eqnarray*}
%\Big |\E (\widetilde{e}^2(a,0)) -1 \Big | \leq M' \cdot \frac 1 {a^{d'}}.
%\end{eqnarray*}
Let  $(X_1,\ldots,X_N)$ be a sampled path of $X$ satisfying Assumption A$(d,d')$.  Property \label{vard} allows an estimation of $2d$
from a log-log regression, as soon as a consistent estimator of $\E
(e^2(a,0))$ is provided. For this and with $1 \leq a<N$, consider the sample variance of the wavelet coefficients, 
\begin{eqnarray} \label{samplevar}
 T_{N}(a): = \frac{1}{N-a}\sum
_{k=1}^{N-a}e^{2}(a,k).
\end{eqnarray}
\begin{rem}
In Bardet {\it et al.} (2000), (2008) or in Moulines {\it et al.} (2007) or Roueff and Taqqu (2009), the considered sample variance of wavelet coefficients is 
\begin{eqnarray} \label{samplevarusual} 
V_N(a):=\frac{1}{[N/a]}\sum
_{k=1}^{[N/a]}e^{2}(a,ak)
\end{eqnarray} 
(with $a=2^j$ in case of multiresolution analysis). The definition (\ref{samplevar}) has a drawback and two advantages with respect to this usual definition (\ref{samplevarusual}): it is not adapted to the fast Mallat's algorithm and therefore more time consuming, but it leads to more a simple expression of the asymptotic variance and simulations exhibit that this asymptotic variance is smaller that the one obtained with (\ref{samplevarusual}).
\end{rem}
The following proposition specifies a central limit theorem
satisfied by $\log \widetilde{T}_{N}(a)$, which provides the first step
for obtaining the asymptotic properties of the estimator by log-log
regression. More generally, the following multidimensional central
limit theorem for a vector $(\log \widetilde{T}_{N}(a_i))_i$ can be
established,
\begin{prop}\label{tlclog}
Under Assumption A$(d,d')$, $d<1/2$ and $d'>0$, and if $\psi$ satisfies Assumption $\Psi(k)$ with $k\geq 2$. 
Define $\ell \in \N \setminus \{0,1\}$ and $(r_1,\cdots,r_\ell) \in
(\N^*)^\ell$. Let $(a_n)_{n \in \N}$ be such that $N/a_N \limitN
\infty$ and $a_N \, N^{-1/(1+2d')}\limitN \infty$. Then,
\begin{equation}\label{CLTSN}
\sqrt{\frac{N}{a_N}} \Big(\log {T}_{N}(r_ia_N)-2d \log (r_ia_N)-\log\big (\frac {c_d}{2\pi}K_{(\psi,2d)} \big )\Big)_{1\leq
i\leq \ell}\tend\mathcal{N}_{\ell}\big (0\, ;\,
\Gamma(r_1,\cdots,r_\ell,\psi,d)\big ),
\end{equation}
with $\Gamma(r_1,\cdots,r_\ell,\psi,d)=(\gamma_{ij})_{1\leq i,j\leq
\ell}$ the covariance matrix such that
\begin{eqnarray}\label{cov}
\gamma_{ij}&=&4\pi \, \frac {(r_ir_j')^{1-2d}}{K^2_{(\psi,2d)} }\int_{-\infty}^{\infty} \hspace{-2mm}   \frac{ \big |\widehat \psi (r_i\lambda) \big |^2  |\widehat \psi (r_j\lambda) \big |^2}{\lambda^{4d}} d\lambda.
\end{eqnarray}
\end{prop}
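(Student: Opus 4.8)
The plan is to peel the statement down to a central limit theorem for a single quadratic form in the innovations. First, by the Cram\'er--Wold device it suffices to show that for every $(\theta_i)$ the linear combination $\sum_{i=1}^\ell \theta_i\sqrt{N/a_N}\big(\log T_N(r_ia_N)-2d\log(r_ia_N)-\log(\frac{c_d}{2\pi}K_{(\psi,2d)})\big)$ is asymptotically Gaussian with the variance read off from $\Gamma$. Second, I would replace the deterministic centering by $\log\E(e^2(r_ia_N,0))$: by Property~\ref{cor1}, $\log\E(e^2(a,0))=2d\log a+\mathrm{const}+O(a^{-d'})$, so the residual contributes $\sqrt{N/a_N}\,O(a_N^{-d'})=O\big((N/a_N^{1+2d'})^{1/2}\big)$ to the centering, and this vanishes precisely because $a_N N^{-1/(1+2d')}\to\infty$ (while $N/a_N\to\infty$ is what makes the number of usable coefficients grow). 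Third, since $x\mapsto\log x$ is $\mathcal C^1$ near $1$ with derivative $1$, the delta method reduces everything to a central limit theorem for the normalized sample variances $\sqrt{N/a_N}\big(T_N(r_ia_N)/\E(e^2(r_ia_N,0))-1\big)_{i}$, once $T_N(r_ia_N)/\E(e^2(r_ia_N,0))\to 1$ in probability has been checked.

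\textbf{Quadratic form and covariance.} Because $X$ is linear, each wavelet coefficient is a linear form $e(a,k)=\sum_{s\in\Z}b_{a,k}(s)\xi_s$ with $b_{a,k}(s)=\frac{1}{\sqrt a}\sum_t\alpha(t-s)\psi(\frac{t-k}{a})$, so $T_N(a)-\E T_N(a)$ is a centered quadratic form in the $\xi_s$. I would split it into a diagonal part $\frac{1}{N-a}\sum_s w_a(s,s)(\xi_s^2-1)$ and an off-diagonal part $\frac{1}{N-a}\sum_{s\neq s'}w_a(s,s')\xi_s\xi_{s'}$, where $w_a(s,s')=\sum_{k=1}^{N-a}b_{a,k}(s)b_{a,k}(s')$. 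Using the identity $\Cov(U^2,V^2)=2\Cov(U,V)^2+(\mu_4-3)\sum_s b_s^2c_s^2$ valid for linear forms $U=\sum b_s\xi_s,\ V=\sum c_s\xi_s$, the limiting covariance is carried by the ``Gaussian'' part $\frac{2}{(N-a)(N-a')}\sum_{k,k'}\Cov(e(a,k),e(a',k'))^2$, while the fourth-cumulant (diagonal) contribution, proportional to $(\mu_4-3)$, is to be shown of smaller order as $a_N\to\infty$ — this is the mechanism by which the non-Gaussianity of the innovations disappears and the limit coincides with the Gaussian case of Bardet \emph{et al.} (2008). Passing $\Cov(e(a,k),e(a',k'))$ to the spectral domain, replacing $\sum_t\psi(\frac{t-k}{a})e^{it\lambda}$ by $a\,e^{ik\lambda}\widehat\psi(a\lambda)$, using $f(\lambda)\sim2\pi c_d|\lambda|^{-2d}$ near $0$, and performing the change of variable $u=a_N\lambda$ produces exactly the scaling $(r_ir_j)^{1-2d}$ and the integral $\int|\widehat\psi(r_iu)|^2|\widehat\psi(r_ju)|^2|u|^{-4d}\,du$ of (\ref{cov}).

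\textbf{The core limit theorem.} After these reductions the object is a single degenerate quadratic form $Q_N=\sum_{s\neq s'}q_N(s,s')\xi_s\xi_{s'}$ (plus the negligible diagonal term), and I would prove its asymptotic normality by the martingale-difference method: order the innovations, write $Q_N=\sum_m D_m$ with $D_m=\xi_m\sum_{s<m}q_N(s,m)\xi_s$, and observe that $(D_m)$ is a martingale-difference array for the natural filtration $\mathcal F_m=\sigma(\xi_s,\,s\le m)$. The martingale CLT then requires (i) convergence of the conditional variances $\sum_m\E(D_m^2\mid\mathcal F_{m-1})$ to the limit computed above, and (ii) a conditional Lindeberg/Lyapunov condition, both of which reduce to bounding fourth-order sums of the coefficients $q_N(s,s')$; here $\E|\xi_0|^4<\infty$ and the symmetry of $\xi_0$ (which annihilates the odd-moment cross terms) are exactly what is needed. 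Equivalently one may invoke a de~Jong-type CLT for generalized quadratic forms, whose hypothesis is precisely that $\sum_{s,s'}q_N(s,s')^4$ and the analogous mixed sums are $o\big(\Var(Q_N)^2\big)$.

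\textbf{Main obstacle.} The hardest part will be the spectral asymptotics of the second step: evaluating $\frac{1}{(N-a)^2}\sum_{k,k'}\Cov(e(a,k),e(a',k'))^2$ as $a=a_N\to\infty$ and identifying the limit (\ref{cov}) demands that the discrete sums defining $b_{a,k}(s)$ be replaced by the continuous Fourier transform of $\psi$ uniformly, that the boundary coefficients (those with $k$ near $1$ or $N-a$) be controlled, and that the singularity of $f$ at the origin — present as soon as $d>0$ — be handled. Simultaneously establishing the negligibility of the fourth-cumulant/diagonal term and of the fourth-order coefficient sums required by the martingale CLT is of the same analytic nature and is the second delicate point; the bandwidth conditions $N/a_N\to\infty$ and $a_N N^{-1/(1+2d')}\to\infty$ are what make all these remainders disappear.
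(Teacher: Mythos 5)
Your outline is correct and, for most of its length, follows the same road as the paper: the reduction of the centering to $\log\E(e^2(r_ia_N,0))$ via Property~\ref{cor1} with the bias killed exactly by $a_NN^{-1/(1+2d')}\to\infty$, the delta method, and the covariance computation — splitting $\Cov(e^2(a,b),e^2(a',b'))$ into a fourth-cumulant (diagonal) part proportional to $\mu_4-1$ that becomes negligible under the $N/a_N$ normalization and a ``Gaussian'' part $2\Cov(e(a,k),e(a',k'))^2$ that, after passing to the spectral domain and rescaling $u=a_N\lambda$, yields (\ref{cov}) — are precisely Steps~1 and~4 of the paper's proof. Where you genuinely diverge is the core CLT. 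The paper adapts Giraitis (1985): it expands in Appell polynomials, bounds all cumulants of order $k\geq3$ by $o((Na_N)^{k/2})$ via diagram estimates on Dirichlet kernels, which forces a Cram\'er condition on $\xi_0$ and therefore a separate truncation step to come back down to $\E\xi_0^4<\infty$. You instead propose a martingale-difference decomposition of the off-diagonal quadratic form $\sum_{s\neq s'}q_N(s,s')\xi_s\xi_{s'}$ (or a de~Jong-type CLT). Note that the paper explicitly dismisses ``martingale type results as in Wu (2002) or Furmanczyk (2007)'' on the grounds that $X$ is two-sided; that objection concerns causal functional representations of the process and does not apply to your construction, which only orders the i.i.d.\ innovations of a quadratic form — any enumeration of $\Z$ gives a legitimate filtration. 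If the conditional-variance convergence and the fourth-order coefficient sums $\sum q_N(s,s')^4=o(\Var(Q_N)^2)$ can be verified (an analytic task of the same nature and difficulty as the paper's diagram bounds, and equally dependent on the uniform replacement of the discrete sums by $\widehat\psi$ and on the control of the singularity $|\lambda|^{-2d}$), your route has the advantage of working directly under $\E|\xi_0|^4<\infty$ and dispensing with both the Cram\'er condition and the truncation step; what it does not buy you is any shortcut on the hard spectral asymptotics, and you must still carry the argument through as a triangular array since the kernel $q_N$ depends on $a_N\to\infty$.
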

\section{An adaptive estimator of the memory parameter and an adaptive goodness-of-fit test}\label{Adapt}
The CLT of Proposition \ref{tlclog} is very interesting because it has several consequences. We will see that the (quite) simple expression of the asymptotic covariance matrix is an important advantage  compared to the complicated expression of the asymptotic covariance obtained in the case of a multiresolution analysis (see Roueff and Taqqu, 2009a). First it allows to obtain an estimator $\widehat  d_N$ of $d$ by using an ordinary least square estimation. Hence, define
\begin{equation}\label{defd1} 
 \widehat  d_N(a_N)   :=\big ( 0 ~\frac 1 2 \big ) \,(Z_{a_N}' \, Z_{a_N})^{-1} Z_{a_N}' \big(\log {T}_{N}(r_ia_N)\big)_{1\leq i\leq \ell}\quad\mbox{with}\quad Z_{a_N}=\left ( \begin{array}{cc} 1 & \log (a_N) \\ 1 & \log (2a_N) \\ \vdots & \vdots \\ 1 & \log (\ell a_N) \end{array} \right).
%\left ( \begin{array}{c}\widehat  c_N \\ \widehat  d_N \end{array}\right)  :=(Z_N' \, Z_N)^{-1} Z_N' \big(\log {T}_{N}(r_ia_N)\big)_{1\leq i\leq \ell}\quad\mbox{with}\quad Z_N=\left ( \begin{array}{cc} 1 & \log (a_N) \\ 1 & \log (2a_N) \\ \vdots & \vdots \\ 1 & \log (\ell a_N) \end{array} \right).
\end{equation}
\begin{rem}
From Proposition \ref{tlclog}, it is not possible to chose $(r_1,\ldots,r_\ell)$ for minimizing the asymptotic covariance matrix $\Gamma(r_1,\cdots,r_\ell,\psi,d)$ without knowing the value of $d$. Hence, in the sequel we will only consider the choice $(r_1,r_2,\cdots,r_\ell)=(1,2,\ldots,\ell)$.
\end{rem}
\noindent Then, it is clear from Proposition \ref{tlclog} that $\widehat  d_N(a_N)$ converges to $d$ following a central limit theorem with convergence rate $\sqrt{\frac{N}{a_N}}$ when $a_N$ satisfies the condition $a_N \, N^{-1/(1+2d')}\limitN \infty$. \\
However, in practice, $d'$ is unknown. In Bardet {\it et al.} (2008), an automatic procedure for choosing an ``optimal'' scale $a_N$ has been proposed. We are going to apply again this procedure after recalling its principle: for $\alpha \in (0,1)$, define 
$$
Q_N(\alpha,c,d)= \Big (Y_N(\alpha)-Z_{N^\alpha} \, \big(
\begin{array}{c} c \\ 2d \end{array}\big ) \Big )' \cdot
\Big (Y_N(\alpha)-Z_{N^\alpha} \, \big( \begin{array}{c} c \\
2d \end{array}\big ) \Big ), \quad \mbox{with}\quad Y_N(\alpha) =\big (\log {T}_{N}(iN^\alpha)\big )_{1\leq
i\leq \ell} .
%\\ M_N(\alpha) & =& Z_N (Z_N' \, Z_N)^{-1} Z_N' .
$$
$Q_N(\alpha,c,d)$ corresponds to a squared distance between the
$\ell$ points $\big (\log (i  N^\alpha)\,,\,\log T_N(i 
N^\alpha) \big )_i$ and a line. It can be minimized first by defining for $\alpha \in (0,1)$
$$
\widehat Q_N(\alpha)=Q_N(\alpha,\widehat c(N^\alpha), 2\widehat d(N^\alpha))\quad\mbox{with}\quad \Big(
\begin{array}{c} \widehat c(N^\alpha)\\ 2\widehat d(N^\alpha) \end{array}\Big )=\big (Z_{N^\alpha}'Z_{N^\alpha}\big )^{-1} Z_{N^\alpha}'Y_N(\alpha) ;
$$
and then define $\widehat \alpha_N$ by:
\begin{eqnarray*}
\widehat Q_N(\widehat \alpha_N )=\min_{\alpha \in {\cal A}_N}
\widehat Q_N(\alpha)\quad \mbox{where}\quad {\cal A}_N=\Big \{\frac {2}{\log
N}\,,\,\frac { 3}{\log N}\,, \ldots,\frac {\log [N/\ell]}{\log N}
\Big \}.
\end{eqnarray*}
\begin{rem}
As it was also claimed in Bardet {\it et al.} (2008), in the definition of the set $ {\cal A}_N$, $\log N$ can be replaced by any sequence negligible with respect to any power law of $N$. Hence, in numerical applications we will use $10\, \log N$ which significantly increases the precision of $\widehat \alpha_N$. 
\end{rem}
\noindent Under the assumptions of Proposition \ref{tlclog}, one obtains (see the proof in Bardet {\it et al.}, 2008),
$$
\widehat \alpha_N =\frac {\log {\widehat a_N}}{\log N}
\limiteprobaN \alpha^*=\frac 1 {1+2d'}.
$$
Then define:
\begin{eqnarray}\label{d1}
\widehat {\widehat d_N}:=\widehat d(N^{\widehat \alpha_N})\quad \mbox{and} \quad \widehat \Gamma_N:=\Gamma(1,\cdots,\ell,\widehat {\widehat d_N},\psi).
\end{eqnarray}
It is clear that  $\widehat {\widehat d_N} \limiteprobaN d$ (a convergence rate can also be found in Bardet {\it et al.}, 2008) and therefore, from the expression of $\Gamma$ in (\ref{cov}) and its smoothness with respect to the variable $d$, $\widehat \Gamma_N \limiteprobaN \Gamma(1,\cdots,\ell,d,\psi)$. Thus it is possible to define a (pseudo)-generalized least square estimator (PGLSE) of $d$. Before this, define 
$$
\widetilde \alpha_N:=\widehat \alpha_N+ \frac {6 \widehat
\alpha_N} {(\ell-2)(1-\widehat \alpha_N)} \, \frac {\log \log N}{\log N}.
$$ 
For technical reasons ({\it i.e.} $\Pr(\widetilde
\alpha_N \leq \alpha^*)\limitN 0$), which is not satisfied by $\widehat \alpha _N$, see Bardet {\it et al.}, 2008), in the sequel we prefer to consider $\widetilde \alpha_N$ rather than $\widehat \alpha_N$.
Finally, using the usual expression of PGLSE, the adaptive estimators of $c$ and $d$ can be defined as follows:
\begin{eqnarray}\label{tilded}
\Big( \begin{array}{c} \widetilde  c_N\\ 2\widetilde  d_N\end{array}\Big ):=\big (Z_{N^{\widetilde \alpha_N}}'\widehat \Gamma_N^{-1} Z_{N^{\widetilde \alpha_N}}\big )^{-1} Z_{N^{\widetilde \alpha_N}}'\widehat \Gamma_N^{-1} Y_N(\widetilde \alpha_N).
\end{eqnarray}
The following theorem provides
the asymptotic behavior of the estimator $\widetilde  d_N$,
\begin{thm}\label{tildeD}
Under assumptions of Proposition \ref{tlclog}, with $\displaystyle \sigma^2_d(\ell):=\big ( 0 ~ \frac 1 2 \big ) \big ( Z_{1}'\big ( \Gamma(1,\cdots,\ell,d,\psi)\big )^{-1} Z_{1} \big )^{-1} \big ( 0 ~ \frac 1 2 \big )'$, 
\begin{eqnarray}\label{CLTD2}
\sqrt{\frac{N}{N^{\widetilde \alpha_N }}} \big(\widetilde d_N - d \big)
\tend \mathcal{N}(0\, ; \,\sigma^2_d(\ell))~~~\mbox{and}~~~\forall
\rho>\frac {2(1+3d')}{(\ell-2)d'},~\mbox{}~~ \frac {N^{\frac
{d'}{1+2d'}} }{(\log N)^\rho} \cdot \big|\widetilde d_N - d \big|
\limiteprobaN  0.
\end{eqnarray}
\end{thm}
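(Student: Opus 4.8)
\noindent The plan is to prove the limit law first for a \emph{deterministic} scaling exponent and then to transfer it to the data-driven $\widetilde\alpha_N$. The starting point is an algebraic invariance of the slope component of the PGLS estimator. Since the $i$-th row of $Z_{N^\alpha}$ is $(1,\ \alpha\log N+\log i)$, one has $Z_{N^\alpha}=Z_1\,A$ with $A=\left(\begin{array}{cc}1 & \alpha\log N\\ 0 & 1\end{array}\right)$, and therefore for any symmetric positive-definite weight matrix $\Omega$ the estimator $(Z_{N^\alpha}'\Omega Z_{N^\alpha})^{-1}Z_{N^\alpha}'\Omega\,Y=A^{-1}(Z_1'\Omega Z_1)^{-1}Z_1'\Omega\,Y$; as the bottom row of $A^{-1}$ is $(0\ 1)$, the slope is left unchanged when $Z_{N^\alpha}$ is replaced by $Z_1$. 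Taking $\Omega=\widehat\Gamma_N^{-1}$ this gives
\begin{equation*}
2\widetilde d_N=\big(0\ 1\big)\big(Z_1'\widehat\Gamma_N^{-1}Z_1\big)^{-1}Z_1'\widehat\Gamma_N^{-1}\,Y_N(\widetilde\alpha_N),
\end{equation*}
which eliminates the diverging factor $\log N$ from the design.

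Next I would centre. With $\theta_0:=\big(\log(\tfrac{c_d}{2\pi}K_{(\psi,2d)}),\,2d\big)'$ the vector $Z_{N^\alpha}\theta_0$ is exactly affine in $\log i$ with slope $2d$, so the same invariance yields $\big(0\ 1\big)(Z_1'\Omega Z_1)^{-1}Z_1'\Omega\,Z_{N^\alpha}\theta_0=2d$ and hence, for any deterministic $\alpha$,
\begin{equation*}
2\big(\widetilde d_N-d\big)=\big(0\ 1\big)\big(Z_1'\widehat\Gamma_N^{-1}Z_1\big)^{-1}Z_1'\widehat\Gamma_N^{-1}\,W_N(\alpha),\qquad W_N(\alpha):=Y_N(\alpha)-Z_{N^\alpha}\theta_0.
\end{equation*}
For a fixed $\alpha>\alpha^*$ the scale $a_N=N^\alpha$ satisfies the hypotheses of Proposition~\ref{tlclog}, whence $\sqrt{N/N^\alpha}\,W_N(\alpha)\tend\mathcal N_\ell\big(0,\Gamma(1,\dots,\ell,d,\psi)\big)$. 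Combining this with $\widehat{\widehat d_N}\limiteprobaN d$, which by smoothness of (\ref{cov}) gives $\widehat\Gamma_N\limiteprobaN\Gamma(1,\dots,\ell,d,\psi)$, and applying Slutsky's lemma, the sandwiched covariance $\big(0\ 1\big)(Z_1'\Gamma^{-1}Z_1)^{-1}Z_1'\Gamma^{-1}\,\Gamma\,\Gamma^{-1}Z_1(Z_1'\Gamma^{-1}Z_1)^{-1}\big(0\ 1\big)'$ collapses to $\big(0\ 1\big)(Z_1'\Gamma^{-1}Z_1)^{-1}\big(0\ 1\big)'$; dividing by $2$ produces the announced variance $\sigma^2_d(\ell)$.

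The main difficulty is that $\widetilde\alpha_N$ is random and built from the same coefficients as $Y_N(\widetilde\alpha_N)$. Two ingredients make the transfer possible: the correction term added to $\widehat\alpha_N$ guarantees $\Pr(\widetilde\alpha_N\le\alpha^*)\limitN 0$, so outside an event of vanishing probability the selected scale $N^{\widetilde\alpha_N}$ meets the growth condition $a_N N^{-1/(1+2d')}\limitN\infty$ of Proposition~\ref{tlclog}; and $\widetilde\alpha_N$ lives in the finite grid derived from $\mathcal A_N$, whose cardinality is $O(\log N)$. I would therefore establish a \emph{uniform} version of the central limit theorem for the quadratic forms $e^2(a,k)$ over the deterministic scales $a=N^\alpha$, $\alpha\in\mathcal A_N\cap(\alpha^*,1)$, the Gaussian limit $\mathcal N_\ell(0,\Gamma)$ being the same for every admissible $\alpha$; conditioning on the value of $\widetilde\alpha_N$ and using this uniformity then transfers the display of the previous paragraph to $\alpha=\widetilde\alpha_N$ and yields the first assertion of (\ref{CLTD2}). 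I expect this uniform CLT, together with the control of the dependence between the selection rule and the vector $Y_N(\widetilde\alpha_N)$, to be the technically hardest step.

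For the second assertion I would bound $|\widetilde d_N-d|$ by the sum of its stochastic part, of order $\sqrt{N^{\widetilde\alpha_N}/N}=N^{(\widetilde\alpha_N-1)/2}$ by the CLT just obtained, and of the log-regression bias generated by the second-order term of (\ref{equiD'}), of order $N^{-\widetilde\alpha_N d'}$. Writing $\widetilde\alpha_N-\alpha^*$ as the deterministic correction $\frac{6\widehat\alpha_N}{(\ell-2)(1-\widehat\alpha_N)}\frac{\log\log N}{\log N}$ plus the estimation error $\widehat\alpha_N-\alpha^*$ (whose rate is available from Bardet {\it et al.}, 2008), and using $\tfrac{d'}{1+2d'}=\tfrac{1-\alpha^*}{2}$, both contributions equal $N^{-d'/(1+2d')}$ times a power of $\log N$. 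Collecting the exponents of $\log N$ produced by the correction constant $\tfrac{6}{\ell-2}$ and by the fluctuations of $\widehat\alpha_N$ gives the threshold $\rho>\frac{2(1+3d')}{(\ell-2)d'}$, and the convergence $\frac{N^{d'/(1+2d')}}{(\log N)^\rho}\,|\widetilde d_N-d|\limiteprobaN 0$ follows.
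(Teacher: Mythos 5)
Your proposal follows essentially the same route as the paper: the slope-invariance $Z_{N^\alpha}=Z_1 A$ reduces $\widetilde d_N$ to the linear form $\big(0~\tfrac12\big)\big(Z_1'\widehat\Gamma_N^{-1}Z_1\big)^{-1}Z_1'\widehat\Gamma_N^{-1}Y_N(\widetilde\alpha_N)$, after which the CLT at the selected scale, the consistency $\widehat\Gamma_N\limiteprobaN\Gamma(1,\dots,\ell,d,\psi)$ and Slutsky's lemma collapse the sandwich to the announced variance $\sigma^2_d(\ell)$, exactly as in the paper's proof. The one step you leave as a plan --- the uniform transfer of the CLT from deterministic scales to the data-driven $N^{\widetilde\alpha_N}$, together with the rate analysis giving the $(\log N)^\rho$ threshold --- is precisely the step the paper does not reprove either but imports from Theorem 1 of Bardet \emph{et al.} (2008), so your outline is consistent with the published argument.
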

\begin{rem}
\begin{enumerate}
\item From Gauss-Markov Theorem it is clear that the asymptotic variance of $\widetilde d_N$ is smaller or equal to the one of $\widehat {\widehat d_N}$. Moreover $\widetilde d_N$ satisfies the CLT (\ref{CLTD2}) which provides confidence intervals that are simple to calculate.
\item In the Gaussian case, the adaptive estimator  $\widetilde
d_N$ converge to $d$ with a rate of convergence rate equal to the
minimax rate of convergence $N^{\frac{d'}{1+2d'}}$ up to a logarithm
factor (see Giraitis {\it et al.}, 1997). Thus, this estimator can be compared to adaptive log-periodogram or local Whittle estimators (see respectively Moulines and Soulier, 2003, and Robinson, 1995).
\item Under additive assumptions on $\psi$ ($\psi$ is supposed to have its first
$m$ vanishing moments), the  estimator $\widetilde d_N$ can also be applied to a process $X$ with an additive
polynomial trend of degree $\leq m-1$. Then the tend is ``vanished'' by the wavelet function and the value of $\widetilde d_N$ is the same than without this additive trend. Such robustness property is not possible
with an adaptive log-periodogram or local Whittle estimator.
\end{enumerate}
\end{rem}
\noindent Finally it is easy to deduce from the previous pseudo-generalized least square regression an adaptive goodness-of-fit test. It consists on a sum of the PGLS squared distances between the PGLS regression line and the points. More precisely consider the statistic:
\begin{equation}\label{test}
\widetilde T_N=\frac N {N^{\widetilde \alpha_N}} \, \Big (Y_N(\widetilde \alpha_N)-  Z_{N^{\widetilde \alpha_N}} \big( \begin{array}{c} \widetilde  c_N\\ 2\widetilde  d_N\end{array}\big ) \Big )'\,  \widehat \Gamma_N^{-1} \, \Big (Y_N(\widetilde \alpha_N)-  Z_{N^{\widetilde \alpha_N}} \big( \begin{array}{c} \widetilde  c_N\\ 2\widetilde  d_N\end{array}\big ) \Big ).
\end{equation}
Then, using the previous results, one obtains:
\begin{thm}\label{tildeT}
Under assumptions of Proposition \ref{tlclog}, 
\begin{eqnarray}\label{Testconv}
\widetilde T_N
\tend \chi^2(\ell-2).
\end{eqnarray}
\end{thm}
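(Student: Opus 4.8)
The plan is to reduce $\widetilde T_N$ to a quadratic form in an asymptotically Gaussian vector and then to identify the limit law through a projection argument. Write $Z:=Z_{N^{\widetilde\alpha_N}}$, $\theta_0:=\big(\log(\tfrac{c_d}{2\pi}K_{(\psi,2d)})\,,\,2d\big)'$ and $\Gamma:=\Gamma(1,\cdots,\ell,d,\psi)$. The single substantial input, which is exactly what is proved on the way to Theorem \ref{tildeD}, is the central limit theorem at the data-driven scale:
$$
U_N:=\sqrt{\tfrac{N}{N^{\widetilde\alpha_N}}}\,\big(Y_N(\widetilde\alpha_N)-Z\,\theta_0\big)\tend\mathcal{N}_\ell\big(0\,;\,\Gamma\big),\qquad\text{jointly with}\qquad \widehat\Gamma_N\limiteprobaN\Gamma .
$$
I would stress that passing from the deterministic scale of Proposition \ref{tlclog} to the random exponent $\widetilde\alpha_N$ is legitimate precisely because $\Pr(\widetilde\alpha_N\le\alpha^*)\to0$: the deterministic second order bias coming from the $a^{2d-d'}$ term in (\ref{equiD'}) is of order $N^{-\widetilde\alpha_N d'}$ and is not absorbed by the regression, so one needs $\sqrt{N/N^{\widetilde\alpha_N}}\,N^{-\widetilde\alpha_N d'}\limiteprobaN0$, which is equivalent to $\widetilde\alpha_N>\alpha^*=1/(1+2d')$ eventually. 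This is the genuine obstacle, and it is the reason $\widetilde\alpha_N$ is used in place of $\widehat\alpha_N$.

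Next I would perform the standard (pseudo-)generalized least squares algebra. Set $P_N:=Z(Z'\widehat\Gamma_N^{-1}Z)^{-1}Z'\widehat\Gamma_N^{-1}$, the oblique projector attached to (\ref{tilded}), so that from (\ref{tilded}) the fit is $Z\big(\widetilde c_N,2\widetilde d_N\big)'=P_NY_N(\widetilde\alpha_N)$. Since $P_NZ=Z$, the residual vector in (\ref{test}) satisfies $Y_N(\widetilde\alpha_N)-P_NY_N(\widetilde\alpha_N)=(I-P_N)\big(Y_N(\widetilde\alpha_N)-Z\theta_0\big)$, whence $\sqrt{N/N^{\widetilde\alpha_N}}\,\big(Y_N(\widetilde\alpha_N)-P_NY_N(\widetilde\alpha_N)\big)=(I-P_N)U_N$. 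Using that $P_N$ is idempotent and self-adjoint for the inner product induced by $\widehat\Gamma_N^{-1}$ (i.e. $\widehat\Gamma_N^{-1}P_N=P_N'\widehat\Gamma_N^{-1}$), the elementary identity $(I-P_N)'\widehat\Gamma_N^{-1}(I-P_N)=\widehat\Gamma_N^{-1}(I-P_N)$ then yields
$$
\widetilde T_N=U_N'\,\widehat\Gamma_N^{-1}(I-P_N)\,U_N .
$$

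The observation that removes the apparent dependence on the random exponent is that the columns of $Z_{N^{\widetilde\alpha_N}}$ span the same plane as those of $Z_1$: indeed $\big(\log(iN^{\widetilde\alpha_N})\big)_i=\big(\log i\big)_i+(\widetilde\alpha_N\log N)\,\mathbf{1}$, so adding a multiple of the constant column leaves $\mathrm{col}(Z)=\mathrm{col}(Z_1)$ unchanged. Consequently $P_N=Z_1(Z_1'\widehat\Gamma_N^{-1}Z_1)^{-1}Z_1'\widehat\Gamma_N^{-1}$ is a function of $\widehat\Gamma_N$ alone, and $\widehat\Gamma_N\limiteprobaN\Gamma$ gives $P_N\limiteprobaN P_\infty:=Z_1(Z_1'\Gamma^{-1}Z_1)^{-1}Z_1'\Gamma^{-1}$. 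Combining $U_N\tend\mathcal{N}_\ell(0;\Gamma)$ with $\widehat\Gamma_N^{-1}(I-P_N)\limiteprobaN\Gamma^{-1}(I-P_\infty)$ through Slutsky's lemma and the continuous mapping theorem, I obtain
$$
\widetilde T_N\tend V'\,\Gamma^{-1}(I-P_\infty)\,V,\qquad V\sim\mathcal{N}_\ell\big(0\,;\,\Gamma\big).
$$

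Finally I would identify this limiting law. Writing $V=\Gamma^{1/2}W$ with $W\sim\mathcal{N}_\ell(0;I_\ell)$ and setting $\widetilde Z:=\Gamma^{-1/2}Z_1$, a direct computation gives $\Gamma^{-1/2}P_\infty\Gamma^{1/2}=\widetilde Z(\widetilde Z'\widetilde Z)^{-1}\widetilde Z'$, the orthogonal projector onto $\mathrm{col}(\widetilde Z)$; hence $\Gamma^{-1/2}(I-P_\infty)\Gamma^{1/2}=I-\widetilde Z(\widetilde Z'\widetilde Z)^{-1}\widetilde Z'$ is symmetric, idempotent, and of rank $\ell-\mathrm{rank}(Z_1)=\ell-2$ (the columns $\mathbf{1}$ and $(\log i)_i$ being independent for $\ell\ge2$). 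Therefore $V'\Gamma^{-1}(I-P_\infty)V=W'\big(I-\widetilde Z(\widetilde Z'\widetilde Z)^{-1}\widetilde Z'\big)W$ is the squared Euclidean norm of the projection of a standard Gaussian vector onto an $(\ell-2)$-dimensional subspace, so it follows a $\chi^2(\ell-2)$ law, which proves (\ref{Testconv}). I expect everything after the first paragraph to be routine linear algebra and Slutsky-type arguments; the only delicate point is transferring the CLT to the adaptive scale and controlling the bias, and that is inherited from the proof of Theorem \ref{tildeD}.
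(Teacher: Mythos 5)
Your proof is correct and follows essentially the same route as the paper's: the CLT at the adaptive scale $N^{\widetilde\alpha_N}$ (inherited from the proof of Theorem \ref{tildeD}), the pseudo-GLS projection algebra, and a Cochran-type identification of the residual quadratic form as $\chi^2(\ell-2)$. Your write-up is in fact more careful than the paper's two-line sketch --- in particular the observation that $\mathrm{col}(Z_{N^{\widetilde\alpha_N}})=\mathrm{col}(Z_1)$, which removes the random exponent from the projector, and the explicit whitening $V=\Gamma^{1/2}W$, which corrects the paper's imprecise claim that $\sqrt{N/N^{\widetilde\alpha_N}}\,\widehat\Gamma_N^{-1}Y_N(\widetilde\alpha_N)$ is asymptotically standard Gaussian.
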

\noindent This (adaptive) goodness-of-fit test is therefore very simple to be computed and used. In the case where $ d> 0 $, which can be tested easily from Theorem \ref{tildeD}, this test can also be seen as a test of long memory for linear processes.
ion 
\section{Simulations} \label{Simu}
In the sequel, the numerical consistency and robustness of ${\widetilde  d_N}$ are first investigated.  Simulation are realized and the results obtained with the estimator ${\widetilde  d_N}$
are compared to those obtained with the best known semiparametric long-memory estimators. Finally numerical properties of the test statistic $\widetilde T_N$ are also studied.
\begin{rem}
Note that all
the softwares (in Matlab language) used in this section are available with a free access on {\tt
http://samm.univ-paris1.fr/-Jean-Marc-Bardet}.
\end{rem}
\noindent To begin with, the simulation conditions have to be specified.
The results are obtained from $100$ generated independent samples of
each process belonging to the following "benchmark". The concrete
procedures of generation of these processes are obtained from the
circulant matrix method in case of Gaussian processes  or a truncation of an infinite sum in case of non-Gaussian process (see Doukhan {\it et al.}, 2003).
The simulations are realized for $d=0,\,0.1,\, 0.2,\, 0.3$ and $0.4$, for
$N=10^3$ and $10^4$ and the following processes which satisfy Assumption A$(d,d')$:
\begin{enumerate}
\item the fractional Gaussian noise (fGn) of parameter $H=d+1/2$ (for $0\leq d<0.5$) and $\sigma^2=1$. A fGn is such that
Assumption A$(d,2)$ holds even if a fGn is generally not studied as a Gaussian linear process;
\item a FARIMA$[p,d,q]$ process with parameter $d$ such that
$d \in [0,0.5)$, $p, q\, \in \N$.
A FARIMA$[p,d,q]$  process is such that
Assumption A$(d,2)$ holds when $\E \xi_0^4<\infty$ where $\xi_0$ is the innovation process. 
%\item the FARIMA[1,d,1] with Gaussian innovations and parameter $d$ such that
%$d=(1-D)/2 \in ]0,0.5[$ and $\phi=-0.3$, $\theta=-0.7$ where
%$\phi$ denotes the AR coefficient and $\theta$ the MA coefficient.
%The spectral density $f_{FARIMA}$ of such a process is included in
%the set ${\cal H}(2,C_{D'})$ (thus $D'=2$);
\item the Gaussian stationary process $X^{(d,d')}$, such that its spectral density is
\begin{eqnarray}
f_3(\lambda)=\frac 1 {\lambda^{2d}}(1+\lambda^{d'})~~~\mbox{for
$\lambda \in [-\pi,0)\cup (0,\pi]$},
\end{eqnarray}
with $d \in [0,0.5)$ and $d'\in (0,\infty)$. Therefore the spectral density $f_{3}$ is such that
Assumption $A(d,d')$ holds and since $X^{(d,d')}$ is a Gaussian process, from the Wold decomposition it is also a linear process.
\end{enumerate}
A "benchmark" which will be considered in the sequel consists of the following particular cases of these processes for $d=0,\,0.1,\,0.2,\,0.3,\,0.4$:
\begin{itemize}
\item $X_1:$ fGn processes with parameters $H=d+1/2$;
\item $X_2:$ FARIMA$[0,d,0]$ processes with standard Gaussian
innovations;
\item $X_3:$ FARIMA$[0,d,0]$ processes with
innovations following a uniform ${\cal U}[-1,1]$ distribution;
\item $X_4:$ FARIMA$[0,d,0]$ processes  with
innovations following a symmetric Burr distribution of parameter $(2,1)$ ({\it i.e.} its cumulative distribution function is $F(x)=(1-\frac 1 2 \, (1+x^2)^{-1})\,\I_{x\geq 0}+\frac 1 2 \,(1+x^2)\, \I_{x<0}$);
\item $X_5:$ FARIMA$[0,d,0]$ processes  with
innovations following a Cauchy distribution;
\item $X_6:$ FARIMA$[1,d,1]$ processes with standard Gaussian
innovations, MA coefficient $\phi=-0.3$ and AR coefficient
$\phi=0.7$;
\item $X_7:$ FARIMA$[1,d,1]$ processes with
innovations following a uniform ${\cal U}[-1,1]$ distribution, MA coefficient $\phi=-0.3$ and AR coefficient
$\phi=0.7$;
\item $X_8:$ $X^{(d,d')}$ Gaussian processes with $d'=1$.
\end{itemize}
Note that the processes $X_4$ and $X_5$ do not satisfy the condition $\E \xi_0^4$ required in Theorems \ref{tildeD} and \ref{tildeT}.
However, since we consider the logarithm of wavelet coefficient sample variance and not only the wavelet coefficient sample variance, it should be possible to prove the consistency of $\widetilde d_N$ under a condition such as $\E \xi_0^r$ with $r\geq 2$ and perhaps only $r>0$...
\subsection{Comparison of the wavelet based estimator and other estimators}
First let us specify the different choices concerning the wavelet based estimator:\\
~\\
{\bf Choice of the function $\psi$:} as it was said previously, it is not mandatory to use a wavelet function associated with a multi-resolution analysis. We use here the function $\psi(x)=x^3(1-x)^3\big (x^3-\frac 3 2 \, x^2+\frac {15}{22}\, x-\frac 1 {11}\big )\I_{x\in [0,1]}$ which satisfies Assumption $\Psi(2)$  \\
~\\
{\bf Choice of the parameter $\ell$:} This parameter is
important to estimate the "beginning" of the linear part of the
graph drawn by points $(\log (ia_N), \log T_N(ia_N))_{1\leq i \leq \ell}$ and therefore the data-driven $\widehat a_N$. Moreover this parameter is used for the computation of $\tilde d_N$ as the number of regression points. We chose a two step procedure:
\begin{enumerate}
\item following a numerical study (not detailed here), $\ell=[2*log(N)]$ (therefore $\ell=13$ for $N=1000$ and $\ell=18$ for $N=10000$) seems to be a good choice for the first step: compute $\widehat \alpha_n$. 
\item for the computation of $\tilde d_N$, we first remark that with the chosen function $\psi$, $\widehat \Gamma_N$ does not seem to depend on $d$. As a consequence we decide to compute $\sigma^2_d(\ell)=\big ( 0 ~ \frac 1 2 \big ) \big ( Z_{1}'\big ( \Gamma(1,\cdots,\ell,d,\psi)\big )^{-1} Z_{1} \big )^{-1} \big ( 0 ~ \frac 1 2 \big )'$ for several values of $d$ and $\ell$ using classical approximations of the integrals defined in $\Gamma(1,\cdots,\ell,d,\psi)$. The results of these numerical experiments are reported in Figure \ref{Figure1}. The conclusion of this numerical experiment is the following: for any $d \in [0,0.5 [$, $\sigma^2_d(\ell)$ is almost not depending on $d$ and decreases when $\ell$ increases. Therefore we chose for this second step $\ell=N^{1-\widetilde \alpha_N}(\log N)^{-1}$: by this way the larger considered scale is $N(\log N)^{-1}$ (which is negligible with respect to $N$ and therefore the CLT \ref{CLTSN} holds). 
\begin{figure}[ht]\label{Figure1}
\[
\epsfxsize 13cm \epsfysize 6cm \epsfbox{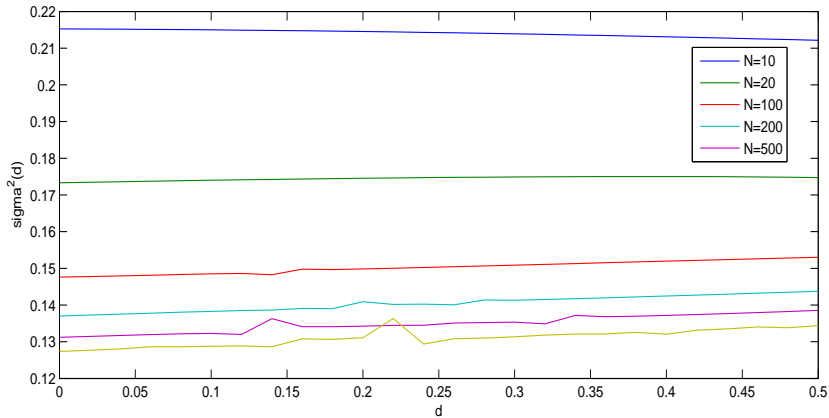}
\]
\caption{\it Graph of the approximated values of $\sigma^2_d(\ell)$ defined in \ref{CLTD2} for $d \in [0,0.5]$ and $N=10,20,50,100,200$ and $500$.}
\end{figure}
\end{enumerate}
Now we consider the previous "benchmark" of processes and apply the estimator  $ {\widetilde d_N}$ and $2$ other
 semiparametric $d$-estimators known for their accuracies:
\begin{itemize}
%\item $\widehat D_{BGK}$ is an "optimal" parametric Whittle estimator
%obtained from a BIC criterium model selection of fractionally
%differenced autoregressive models (introduced by Bhansali {it et
%al.}, 2006). The required confidence interval of the estimation
%$\widehat D_{BGK}$ is $[\widehat D_{R}-2/N^{1/4}\, , \, \widehat
%D_{R}-2/N^{1/4}]$;
%\item $\widehat D_{GRS}$ is an adaptive local periodogram estimator introduced by Giraitis
%{\it et al} (2000). It requires two parameters: a bandwidth
%parameter $m$, with a procedure of determination provided in this article,
% and a number of low
%trimmed frequencies $l$ (satisfying different conditions but without
%being fixed in this paper; after a number of  simulations,
%$l=\max(m^{1/3},10)$ is chosen);
\item $\widehat d_{MS}$ is the adaptive global log-periodogram estimator introduced
by Moulines and Soulier (1998, 2003), also called FEXP estimator,
with bias-variance balance parameter $\kappa=2$;
\item $\widehat d_{R}$ is the local Whittle estimator introduced by
Robinson (1995). The trimming parameter is $m=N/30$.
%\item $\widehat d_{W}$ is an adaptive wavelet based estimator
%introduced in Veitch {\it et al.}, 2003, using a Daubechie's wavelet. A data-driven procedure based on a Chi-square test  
\end{itemize}
Simulation results are reported in {Table} {\ref{Table4}}. \\
\begin{table}[p] {\scriptsize
\begin{center}
$N=10^3~\longrightarrow $ ~\begin{tabular}{|c|c|c|c|c|c|c|}
\hline\hline
Model & $\sqrt{MSE}$ & $d=0$ & $d=0.1$ & $d=0.2$ & $d=0.3$ & $d=0.4$ \\
\hline \hline
$X_1$& $\sqrt{MSE}$ $\widehat d_{MS}$ & 0.089 &0.091&0.096 &0.090&0.100 \\
&  $\sqrt{MSE}$ $\widehat d_{R}$ & 0.102 &0.114&0.116 &0.106&0.102\\
&  $\sqrt{MSE}$ $ {\widetilde d_N}$ &\bf 0.047  &\bf 0.046&\bf 0.042 &\bf 0.052&\bf 0.047 \\
&  $\widetilde {p}_n $ &0.85 & 0.76&0.78 &0.76 & 0.64\\
\hline  $X_2$& $\sqrt{MSE}$ $\widehat d_{MS}$ & 0.091 &0.094&0.086 &0.091&0.099 \\
&  $\sqrt{MSE}$ $\widehat d_{R}$ & 0.107 &0.105&0.112 &0.110&0.097 \\
&  $\sqrt{MSE}$ $ {\widetilde d_N}$ &\bf 0.048 &\bf0.050&\bf 0.053 &\bf 0.061&\bf 0.074\\
&  $\widetilde {p}_n $ &0.82 & 0.82&0.75 &0.73 & 0.67\\
\hline $X_3$& $\sqrt{MSE}$ $\widehat d_{MS}$ & 0.092 &0.094&0.080 &0.099&0.096 \\
&  $\sqrt{MSE}$ $\widehat d_{R}$ & 0.113 &0.113&0.100 &0.112&0.095\\
&  $\sqrt{MSE}$ $ {\widetilde d_N}$ &\bf 0.052  &\bf 0.071&\bf 0.063 &\bf 0.077 & \bf 0.092 \\
&  $\widetilde {p}_n $ &0.84 & 0.72&0.75 &0.67 & 0.51\\
\hline  $X_4$& $\sqrt{MSE}$ $\widehat d_{MS}$ & 0.088 &0.079&0.079 &0.093&0.104 \\
&  $\sqrt{MSE}$ $\widehat d_{R}$ & 0.096 &0.100&0.103 &0.097&0.095 \\
&  $\sqrt{MSE}$ $ {\widetilde d_N}$ &\bf  0.051 &\bf 0.066&\bf  0.056 &\bf 0.061&\bf 0.064\\
&  $\widetilde {p}_n $ &0.84 & 0.78&0.78 &0.75 & 0.66\\
\hline $X_5$& $\sqrt{MSE}$ $\widehat d_{MS}$ &\bf  0.069 &\bf 0.067&\bf 0.077 &0.121&0.143 \\
&  $\sqrt{MSE}$ $\widehat d_{R}$ & 0.072 &0.078&0.093 &\bf 0.087& \bf 0.074\\
&  $\sqrt{MSE}$ $ {\widetilde d_N}$ & 0.073  & 0.069& 0.083 & \bf 0.087&0.120 \\
&  $\widetilde {p}_n $ &0.73 & 0.69&0.68 &0.74 & 0.64\\
\hline  $X_6$& $\sqrt{MSE}$ $\widehat d_{MS}$ &\bf  0.096 &\bf 0.091&\bf 0.090 &\bf 0.086&\bf 0.093 \\
&  $\sqrt{MSE}$ $\widehat d_{R}$ & 0.111 &0.102&0.100 &0.101&0.101 \\
&  $\sqrt{MSE}$ $ {\widetilde d_N}$ & 0.153 &0.146& 0.144 & 0.158& 0.147\\
&  $\widetilde {p}_n $ &0.52 & 0.47&0.48 &0.39 & 0.50\\
\hline $X_7$& $\sqrt{MSE}$ $\widehat d_{MS}$ &\bf 0.085 &\bf 0.096&\bf 0.086 &\bf 0.093& 0.098 \\
&  $\sqrt{MSE}$ $\widehat d_{R}$ & 0.106 &0.116&0.097 &0.099&\bf 0.092\\
&  $\sqrt{MSE}$ $ {\widetilde d_N}$ & 0.155 & 0.150& 0.56 & 0.147&0.157 \\
&  $\widetilde {p}_n $ &0.60 & 0.55&0.49 &0.52 & 0.41\\
\hline  $X_8$& $\sqrt{MSE}$ $\widehat d_{MS}$ &\bf  0.097 &\bf 0.104& \bf0.097 & \bf 0.094&\bf 0.101\\
&  $\sqrt{MSE}$ $\widehat d_{R}$ & 0.120 &0.116&0.117 &0.113&0.110  \\
&  $\sqrt{MSE}$ $ {\widetilde d_N}$ & 0.179 &0.189& 0.177 & 0.175& 0.176\\
&  $\widetilde {p}_n $ &0.75 & 0.75&0.68 &0.66 & 0.67\\
\hline \end{tabular}
\end{center}}
{\scriptsize
\begin{center}
$N=10^4~\longrightarrow $ ~\begin{tabular}{|c|c|c|c|c|c|c|}
\hline\hline
Model & $\sqrt{MSE}$ & $d=0$ & $d=0.1$ & $d=0.2$ & $d=0.3$ & $d=0.4$ \\
\hline \hline
$X_1$& $\sqrt{MSE}$ $\widehat d_{MS}$ & 0.032 &0.029&0.031 &0.031&0.036 \\
&  $\sqrt{MSE}$ $\widehat d_{R}$ & 0.028 &0.028&\bf 0.029 &0.029&0.032\\
&  $\sqrt{MSE}$ $ {\widetilde d_N}$ &\bf 0.016  &\bf 0.027& 0.034 &\bf 0.025&\bf 0.022 \\
&  $\widetilde {p}_n $ &0.97 & 0.93&0.97 &0.94 & 0.97\\
\hline  $X_2$& $\sqrt{MSE}$ $\widehat d_{MS}$ & 0.034 &0.030&0.029 &0.032&0.028 \\
&  $\sqrt{MSE}$ $\widehat d_{R}$ & 0.027 &0.027&0.029 &0.028&\bf 0.023 \\
&  $\sqrt{MSE}$ $ {\widetilde d_N}$ &\bf 0.026 &\bf0.019&\bf 0.019 &\bf 0.019& 0.025\\
&  $\widetilde {p}_n $ &0.95 & 0.97&0.98 &0.96 & 0.94\\
\hline $X_3$& $\sqrt{MSE}$ $\widehat d_{MS}$ & 0.034 &0.034&0.033 &0.030&0.031 \\
&  $\sqrt{MSE}$ $\widehat d_{R}$ & 0.029 &0.028&0.028 &0.028&\bf 0.029\\
&  $\sqrt{MSE}$ $ {\widetilde d_N}$ &\bf 0.027  &\bf 0.017&\bf 0.016 &\bf 0.022 & 0.030 \\
&  $\widetilde {p}_n $ &0.93 & 0.96&0.97 &0.93 & 0.92\\
\hline  $X_4$& $\sqrt{MSE}$ $\widehat d_{MS}$ & 0.029 &0.060&0.036 &0.031&0.031 \\
&  $\sqrt{MSE}$ $\widehat d_{R}$ & 0.025 &0.027&0.029 &0.031&0.029 \\
&  $\sqrt{MSE}$ $ {\widetilde d_N}$ &\bf  0.016 &\bf 0.020&\bf  0.021 &\bf 0.015&\bf 0.023\\
&  $\widetilde {p}_n $ &0.95 & 0.91&0.97 &0.92 & 0.91\\
\hline $X_5$& $\sqrt{MSE}$ $\widehat d_{MS}$ &  0.093 & \bf 0.046& 0.039 &0.073&0.047 \\
&  $\sqrt{MSE}$ $\widehat d_{R}$ & \bf 0.040 &\bf 0.046&0.035 & 0.032& \bf 0.024\\
&  $\sqrt{MSE}$ $ {\widetilde d_N}$ & 0.056  & 0.071&\bf  0.027 & \bf 0.025 &\bf 0.024 \\
&  $\widetilde {p}_n $ &0.85 & 0.88&0.93 &0.86 & 0.85\\
\hline  $X_6$& $\sqrt{MSE}$ $\widehat d_{MS}$ & 0.031 & 0.032& 0.033 &0.032& 0.029 \\
&  $\sqrt{MSE}$ $\widehat d_{R}$ &\bf 0.029 &\bf 0.028&\bf 0.028 &\bf 0.028&\bf 0.028 \\
&  $\sqrt{MSE}$ $ {\widetilde d_N}$ & 0.045 &0.044& 0.046 & 0.044& 0.041\\
&  $\widetilde {p}_n $ &0.96 & 0.93&0.89 &0.93 & 0.90\\
\hline $X_7$& $\sqrt{MSE}$ $\widehat d_{MS}$ & 0.030 & 0.031& 0.037 & 0.030& 0.029 \\
&  $\sqrt{MSE}$ $\widehat d_{R}$ &\bf 0.027 &\bf 0.027&\bf 0.032 &\bf 0.028&\bf 0.027\\
&  $\sqrt{MSE}$ $ {\widetilde d_N}$ & 0.049 & 0.044& 0.050 & 0.048&0.046 \\
&  $\widetilde {p}_n $ &0.94 & 0.91&0.88 &0.87 & 0.86\\
\hline  $X_8$& $\sqrt{MSE}$ $\widehat d_{MS}$ &\bf  0.038 & 0.040& \bf0.040 & \bf 0.035& 0.037\\
&  $\sqrt{MSE}$ $\widehat d_{R}$ & 0.039 &\bf 0.038&\bf 0.040 &0.036&\bf 0.035  \\
&  $\sqrt{MSE}$ $ {\widetilde d_N}$ & 0.085 &0.083& 0.086 & 0.087& 0.085\\
&  $\widetilde {p}_n $ &0.92 & 0.94&0.94 &0.95 & 0.93\\
\hline 
\end{tabular}
\end{center}}
\caption {\label{Table4} \it Comparison of the different long-memory parameter estimators for processes of the benchmark.
For each process and value of $d$ and $N$, $\sqrt{MSE}$ are computed from $100$ independent generated samples. Here $\widetilde {p}_n=\frac 1 n \, \# \big (\widetilde T_N<q_{\chi^2(\ell-2)}(0.95)\big )$: this is the frequency of acceptation of the adaptive goodness-of-fit test.}
\end{table}
\newline
\noindent {\it Conclusions from Table \ref{Table4}:} The wavelet based estimator $ {\widetilde d_N}$ numerically shows a convincing convergence rate with respect to the other estimators. Both the ``spectral'' estimator $\widehat d_R$ and $\widehat d_{MS}$ provide more stable results almost not sensible to $d$ and the flatness of the spectral density of the process, while the convergence rate of the wavelet based estimator ${\widetilde d_N}$ is more dependent on the spectral density of the process. But, especially in cases of ``smooth'' spectral densities (fGn and FARIMA$(0,d,0)$), $ {\widetilde d_N}$ is a very accurate semiparametric estimator and is globally more efficient than the other estimators.
\begin{rem}
In Bardet {\it et al.} (2008) we also compared two adaptive wavelet based estimators (the one defined in Veitch {\it et al.}, 2003 and the one defined in Bardet {\it et al.}, 2008) with  $\widehat d_{MS}$  and $\widehat d_{R}$ (and also with two others defined in Giraitis {\it et al.}, 2000, and Giraitis {\it et al.}, 2006, which exhibit worse numerical properties of consistency). We observe that $\sqrt{MSE}$ of $ {\widetilde d_N}$ obtained in Table \ref{Table4} is generally smaller than the one obtained with the estimator defined in Bardet {\it et al.} (2008) for two reasons: the choice of the definition (\ref{samplevar}) of wavelet coefficient sample variance instead of (\ref{samplevarusual}) and the choice of a PGLS regression instead of a LS regression. 
\end{rem}
%\begin{figure}[ht]\label{Figure2}
%\[
%\epsfxsize 8.5cm \epsfysize 6cm \epsfbox{DensdN.eps}
%\hspace*{0.6 cm} \epsfxsize 8.5cm \epsfysize 6cm
%\epsfbox{DensTN.eps}
%\]
%\caption{\it Density estimations and corresponding theoretical densities of $\widehat {\widetilde d}_N$ and
%$\widetilde T_N$ for $100$ samples of FARIMA$(1,d,1)$ with $d=-0.2$ for $N=10^5$ and $p=20$.}
%\end{figure}
%In the following subsection, Table 3
%exhibits a "better" convergence by using $\ell_2$ such that $\ell_2 \, \widehat a_N \simeq N/10$,
%and this estimators will be compared to other estimators of long memory parameter. \\

\noindent{\bf Comparison of the robustness of the different semiparametric estimators:} To
conclude with the  numerical properties of the estimators, $3$ different
processes not satisfying Assumption $A(d,d')$ are considered:
\begin{itemize}
\item a Gaussian stationary process with a spectral density $f(\lambda)=\big ||\lambda|-\pi/2\big |^{-2d}$
for all $\lambda\in [-\pi,\pi] \setminus \{-\pi/2,\pi/2\}$. The
local behavior of $f$ in $0$ is $f(|\lambda|) \sim (\pi/2)^{-2d}\,
|\lambda|^{-2d}$ with $d=0$, but the smoothness condition for $f$ in
Assumption $A(0,2)$ is not satisfied.
\item a trended Gaussian FARIMA$(0,d,0)$ with an additive linear trend ($X_t=FARIMA_t+(1-2t/n)$ for $t=1,\cdots,n$ and therefore $mean(X_1,\cdots,X_n)\simeq 0$);
\item a Gaussian FARIMA$(0,d,0)$ with an additive linear trend	and an additive sinusoidal seasonal component of period $T=12$ ($X_t=FARIMA_t+(1-2t/n)+\sin(\pi \,t/6)$ for $t=1,\cdots,n$ and therefore $mean(X_1,\cdots,X_n)\simeq 0$).
\end{itemize}
The results of these simulations are given in {Table} {\ref{Table5}}.\\
\begin{table}[t] {\scriptsize
\begin{center}
$N=10^3~\longrightarrow $ ~\begin{tabular}{|c|c|c|c|c|c|c|}
\hline\hline
Model & $\sqrt{MSE}$& $d=0$ & $d=0.1$ & $d=0.2$ & $d=0.3$ & $d=0.4$ \\
\hline \hline
\hline  GARMA$(0,d,0)$ &  $\sqrt{MSE}$ $\widehat d_{MS}$ & 0.089 &0.091& 0.123 & 0.132&0.166 \\
&  $\sqrt{MSE}$ $\widehat d_{R}$ & 0.112 &0.111&0.119 &\bf 0.106& \bf 0.106 \\
&  $\sqrt{MSE}$ ${\widetilde d_N}$ &\bf 0.041 &\bf 0.076&\bf 0.114 &0.142&0.180 \\
&  $\widetilde {p}_n $ &0.82 & 0.78&0.63 &0.59 & 0.46\\
\hline  Trend &  $\sqrt{MSE}$ $\widehat d_{MS}$ & 0.548 &0.411&  0.292 &  0.190& 0.142 \\
&  $\sqrt{MSE}$ $\widehat d_{R}$ & 0.499 &0.394&0.279 &0.167&0.091 \\
&  $\sqrt{MSE}$ ${\widetilde d_N}$ &\bf 0.044 & \bf 0.052&\bf  0.056 &\bf 0.060& \bf 0.065 \\
&  $\widetilde {p}_n $ &0.83 & 0.81&0.80 &0.73 & 0.64\\
\hline Trend + Seasonality &  $\sqrt{MSE}$ $\widehat d_{MS}$ & 0.479 &0.347& 0.233 & 0.142& 0.112 \\
&  $\sqrt{MSE}$ $\widehat d_{R}$ & 0.499 &0.393&0.279 &0.167&\bf 0.091 \\
&  $\sqrt{MSE}$ $ {\widetilde d_N}$ &\bf 0.216 &\bf 0.215& \bf 0.215 &\bf 0.217& 0.185 \\
&  $\widetilde {p}_n $ &0.35 & 0.26&0.18 &0.21 & 0.18\\
\hline
\end{tabular}
\end{center}}
{\scriptsize
\begin{center}
$N=10^4~\longrightarrow $ ~\begin{tabular}{|c|c|c|c|c|c|c|}
\hline\hline
Model & $\sqrt{MSE}$& $d=0$ & $d=0.1$ & $d=0.2$ & $d=0.3$ & $d=0.4$ \\
\hline \hline
\hline  GARMA$(0,d,0)$ &  $\sqrt{MSE}$ $\widehat d_{MS}$ & 0.031 &0.035& 0.039 & 0.049&0.062 \\
&  $\sqrt{MSE}$ $\widehat d_{R}$ & 0.028 &\bf 0.031&\bf 0.030 &\bf 0.030& \bf 0.034 \\
&  $\sqrt{MSE}$ ${\widetilde d_N}$ &\bf 0.023 & 0.053& 0.052 &0.058&0.060 \\
&  $\widetilde {p}_n $ &0.96 & 0.94&0.93 &0.91 & 0.88\\
\hline  Trend &  $\sqrt{MSE}$ $\widehat d_{MS}$ & 0.452 &0.286&  0.167 &  0.096& 0.056 \\
&  $\sqrt{MSE}$ $\widehat d_{R}$ & 0.433 &0.308&0.191 &0.100&0.051 \\
&  $\sqrt{MSE}$ ${\widetilde d_N}$ &\bf 0.014 & \bf 0.016&\bf  0.016 &\bf 0.021& \bf 0.028 \\
&  $\widetilde {p}_n $ &0.99 & 0.97&0.97 &0.95 & 0.93\\
\hline Trend + Seasonality &  $\sqrt{MSE}$ $\widehat d_{MS}$ & 0.471 &0.307& 0.196 & 0.123& 0.076 \\
&  $\sqrt{MSE}$ $\widehat d_{R}$ & 0.432 &0.305&0.191 &0.100& 0.052 \\
&  $\sqrt{MSE}$ $ {\widetilde d_N}$ &\bf 0.044 &\bf 0.069& \bf 0.047 &\bf 0.042& \bf 0.045 \\
&  $\widetilde {p}_n $ &0.83 & 0.81&0.76 &0.78 & 0.82\\
\hline
\end{tabular}
\end{center}}
\caption {\label{Table5} \it Robustness of the different long-memory parameter estimators. For each process and value of $d$ and $N$, $\sqrt{MSE}$ are computed from $100$ independent generated samples. Here $\widetilde {p}_n=\frac 1 n \, \# \big (\widetilde T_N<q_{\chi^2(\ell-2)}(0.95)\big )$: this is the frequency of acceptation of the adaptive goodness-of-fit test.}
\end{table}
\newline
\noindent ~\\
{\it Conclusions from {Table} {\ref{Table5}}:}  The main advantages of $ {\widetilde d_N}$ with respect to $\widehat d_{MS}$ and $\widehat d_{R}$ are exhibited in this table: it is robust with respect to smooth trends (or seasonality). Note that the sample mean of $\widehat d_{MS}$ and $\widehat d_{R}$ in the case of processes with trend or with trend and seasonality is almost $0.5$
\subsection{Consistency and robustness of the adaptive goodness-of-fit test:}
Tables {\ref{Table4}} and {\ref{Table5}} provide informations concerning the adaptive goodness-of-fit test. A general conclusion is that the consistency properties of this test are clearly satisfying when $N$ is large enough ($N=1000$ seems to be too small for using this goodness-of-fit test). \\
~\\ 
We also would like to know the behavior of the test statistic under the assumption $H_1$. We are going to study the case of a process which does not satisfy either the stationarity condition either the relation (\ref{alpha}) also verified by the spectral density. Hence $3$ particular cases are considered:
\begin{enumerate}
\item a process $X$ denoted MFARIMA and defined as a succession of two independent Gaussian FARIMA processes. More precisely, we consider $X_t=FARIMA(0,0.1,0)$ for $t=1,\cdots,n/2$ and $X_t=FARIMA(0,0.4,0)$ for $t=n/2+1,\cdots,n$. 
\item a process $X$ denoted MGN and defined by the increments of a multifractional Brownian motion (introduced in Peltier and Lévy-Vehel, 1995). Using the harmonizable representation, define $Y=(Y_t)_t$ such that 
$$ 
Y_t  = C(t) \  \int_{\R} \frac{{e}^{{i}t x} -
1}{|x|^{H(t)+1/2}} W(d x)$$ 
where $W(d x)$ is a complex-valued Gaussian noise with variance $d
x$ and $H(\cdot)$ is a function (the case $H(\cdot)= H$ with $H\in (0,1)$ is the case of fBm), $C(\cdot)$ i a function.  Here we consider the functions $H(t)=0.5+0.4\sin(t/10)$ and $C(t)=1$. Then $X_t=Y_{t+1}-Y_t$ for $t\in \Z$. $X$ is not a stationary process but ``locally'' behaves as a fGn with a parameter $H(t)$ (therefore depending on $t$).
\item a process $X$  denoted MFGN and defined by the increments of a multiscale fractional Brownian motion (introduced in Bardet and Bertrand, 2007). Let $Z=(Z_t)_t$ be such that 
$$ 
Z_t =  \  \int_{\R} \sigma(x)\, \frac{{e}^{{i}t x} -
1}{|x|^{H(x)+1/2}} W(d x)$$ 
where $W(d x)$ is a complex-valued Gaussian noise with variance $d
x$, $H(\cdot)$ and $\sigma(\cdot)$ are piecewise constant functions.  Here we consider the functions $H(x)=0.9$ for $0.001\leq x \leq 0.04$ and $H=0.1$ for $0.04\leq x \leq 3$. Then $X_t=Z_{t+1}-Z_t$ for $t\in \Z$ and $X$ is a Gaussian stationary process which can be written as a linear process behaving as a fGn of parameter $0.9$ for low frequencies (large time) and as a fGn of parameter $0.1$ for high frequencies (small time).  
\end{enumerate}
We applied the test statistic to $100$ independent replications of both these processes. The results of this simulation are proposed in Table \ref{Table6}. We observed that the processes MGN and MFGN are clearly rejected with the adaptive goodness-of-fit test. However, the test is not able to reject the  process MFARIMA which does not satisfy the Assumption of the Theorem \ref{tildeT}. The reason is that the test does an average of the behavior of the sample and in the case of changes (it is such the case for MFARIMA) it is the average LRD parameter which is estimated (an average of  $0.30$ for $\widetilde{d_N}$ and a standard deviation $0.03$ are obtained).   \\
\begin{table}[t] {\scriptsize
\begin{center}
\begin{tabular}{|c|c|c|}
\hline\hline
Model &  $N=10^3$ &$N=10^4$ \\
\hline 
\hline 
MFARIMA & $\widetilde {p}_n =0.58$&$\widetilde {p}_n =0.87$ \\
 MGN &  $\widetilde {p}_n =0.18$&$\widetilde {p}_n =0.08$ \\
 MFGN &  $\widetilde {p}_n =0.02$&$\widetilde {p}_n =0.04$ \\
\hline
\end{tabular}
\end{center}}
\caption {\label{Table6} \it Robustness of the adaptive goodness-of-fit test with $\widetilde {p}_n=\frac 1 n \, \# \big (\widetilde T_N<q_{\chi^2(\ell-2)}(0.95)\big )$ the frequency of acceptation of the adaptive goodness-of-fit test.}
\end{table}
\newline 
%------------------------------------------------------------------------------------------

\section{Proofs} \label{Proofs}
First, we will use many times the following lemma:
\begin{lem}\label{dev}
If $\psi$ satisfies Assumption $\Psi(k)$ with $k\geq 1$, then there exists $C_\psi\geq 0$ such that for all $\lambda \in  \R$,
\begin{eqnarray}\label{induction}
\Big |\frac 1 a \sum_{k=1}^a \psi\big ( \frac k a \big )e^{-i\lambda\, \frac k a} -\int_0^1
\psi(t)e^{-i \lambda\, t}dt \Big |  \leq C_{\psi} \, \frac {(1+|\lambda|^k)} {a^{k}}.
\end{eqnarray}
\end{lem}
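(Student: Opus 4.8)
The plan is to recognize the left-hand side of (\ref{induction}) as the quadrature error of the rectangle (right-endpoint) rule applied to the function $g_\lambda(t):=\psi(t)\,e^{-i\lambda t}$ on $[0,1]$, and to exploit the strong vanishing of $\psi$ at the endpoints to upgrade the naive $a^{-1}$ rate to $a^{-k}$. First I would record that, since $\psi$ satisfies Assumption $\Psi(k)$, one has $\psi^{(m)}(0)=\psi^{(m)}(1)=0$ for all $0\le m\le k$; by the Leibniz rule $g_\lambda^{(j)}(t)=\sum_{m=0}^{j}\binom{j}{m}(-i\lambda)^{j-m}\psi^{(m)}(t)\,e^{-i\lambda t}$, so that $g_\lambda^{(j)}(0)=g_\lambda^{(j)}(1)=0$ for every $0\le j\le k$ as well. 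In particular $g_\lambda(0)=g_\lambda(1)=0$, so the right-endpoint sum $\frac1a\sum_{j=1}^a g_\lambda(j/a)$ coincides with the trapezoidal sum and the Euler--Maclaurin machinery applies without boundary adjustment.

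The core of the argument is an Euler--Maclaurin expansion, which I would carry out by induction (as the label of (\ref{induction}) suggests) through repeated integration by parts against the $1$-periodic Bernoulli functions $\overline B_m$. After a single summation-by-parts step the error takes the form $\int_0^1 g_\lambda'(s)\,\frac{\{as\}}{a}\,ds$; splitting the sawtooth as $\{as\}=\tfrac12+\overline B_1(as)$, the mean part contributes $\frac{1}{2a}(g_\lambda(1)-g_\lambda(0))=0$, and integrating the remaining term by parts repeatedly and using $\overline B_{m}'=m\,\overline B_{m-1}$ produces, at each stage, a boundary term proportional to $g_\lambda^{(m-1)}(1)-g_\lambda^{(m-1)}(0)$. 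Every one of these boundary terms vanishes by the first paragraph, so after $k$ steps only the remainder survives, giving an identity of the shape
\[
\frac 1 a \sum_{j=1}^a \psi\Big(\frac j a\Big)e^{-i\lambda j/a}-\int_0^1\psi(t)e^{-i\lambda t}\,dt=\frac{c_k}{a^{k}}\int_0^1 g_\lambda^{(k)}(s)\,\overline B_k(as)\,ds,
\]
with $c_k$ depending only on $k$ and $\overline B_k$ uniformly bounded on $\R$.

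It then remains to bound the right-hand side. Since $\overline B_k$ is bounded, the modulus is at most $\frac{C_k}{a^k}\sup_{s\in[0,1]}\abs{g_\lambda^{(k)}(s)}$, and the Leibniz formula above gives $\abs{g_\lambda^{(k)}(s)}\le\sum_{m=0}^{k}\binom{k}{m}\abs{\lambda}^{k-m}\norm{\psi^{(m)}}_\infty$, a polynomial of degree $k$ in $\abs{\lambda}$ which is dominated by $C_\psi\,(1+\abs{\lambda}^{k})$ for all $\lambda\in\R$ (treating $\abs{\lambda}\le1$ and $\abs{\lambda}\ge1$ separately). Combining the two bounds yields (\ref{induction}). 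I expect the only delicate point to be the bookkeeping in the integration-by-parts induction, namely checking that every boundary contribution indeed involves a derivative $g_\lambda^{(j)}$ with $j\le k-1$ and hence vanishes; the remaining estimates are routine. An essentially equivalent route, if one prefers to avoid Bernoulli functions, is to periodize $g_\lambda$ to a $C^{k}$ function on $\R$ and invoke the aliasing (Poisson-summation) identity together with the decay $\abs{\widehat{g_\lambda}(n)}\le\norm{g_\lambda^{(k)}}_{L^1}(2\pi\abs n)^{-k}$; this is clean for $k\ge2$ but the case $k=1$ then needs a little extra care since the aliasing series is only conditionally convergent.
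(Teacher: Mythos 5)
Your proof is correct. The paper itself gives only a one-line indication (``mathematical induction on $k$'' for $\lambda\in[-\pi,\pi]$, then an extension to $\R$ by periodicity), and your Euler--Maclaurin argument is in substance the same induction on $k$, carried out explicitly: the repeated integration by parts against the periodized Bernoulli functions is exactly the inductive step, and the hypothesis $\psi^{(j)}(0)=\psi^{(j)}(1)=0$ for $0\le j\le k$ (hence $g_\lambda^{(j)}(0)=g_\lambda^{(j)}(1)=0$) is what kills every boundary term and upgrades the rate from $a^{-1}$ to $a^{-k}$. The one genuine difference is that your remainder bound $C_k a^{-k}\sup_s|g_\lambda^{(k)}(s)|\le C_\psi a^{-k}(1+|\lambda|^k)$ is uniform in $\lambda\in\R$ from the outset, so you never need the paper's final periodicity step; this is actually cleaner, since the quantities involved ($\frac1a\sum_j\psi(j/a)e^{-i\lambda j/a}$ is $2\pi a$-periodic in $\lambda$, while $\widehat\psi(\lambda)$ is not periodic at all) make that extension the most delicate part of the paper's sketch. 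Your bookkeeping is sound: the mean part of the sawtooth contributes $\frac{1}{2a}(g_\lambda(1)-g_\lambda(0))=0$, each subsequent boundary term is proportional to $g_\lambda^{(m)}(1)-g_\lambda^{(m)}(0)$ with $m\le k-1$, and the Leibniz estimate for $g_\lambda^{(k)}$ gives the stated polynomial growth in $|\lambda|$.
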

\begin{proof}[Proof of Lemma \ref{dev}]
This proof is easily established from a mathematical induction on $k$ when $\lambda \in [-\pi,\pi]$. Then since we consider $2\pi$-periodic functions (of $\lambda$) the result can be extended to $\R$.
\end{proof}
\begin{proof}[Proof of Property \ref{cor1}]
First, it is clear that for $a\in \N^*$, $(e(a,b))_{1\leq b \leq N-a}$ is a centered linear process. It is a stationary process because $X$ is a stationary process and clearly $\sum_{k=1} ^N \frac 1 a \psi^2\big (\frac {k-b}a \big )<\infty$. \\
Now following similar computations to those performed in  Bardet {\it et al.} (2008), we obtain for $a\in \N^*$,
\begin{eqnarray}
\nonumber \E (e^{2}(a,0))  &=& \int_{-a\pi}^{a\pi} f\big ( \frac  u a \big )
\times \Big |\frac 1 {a}\sum_{k=1}^a \psi\big ( \frac  k a
\big ) e^{-i \frac  k a u}\Big |^2 \, du.
\end{eqnarray}
Now, since $\sup_{u\in \R} |\widehat \psi (u) | <\infty$, for $a$ large enough,
\begin{eqnarray}
\nonumber
%\label{Fourier} 
\Big |\E (e^{2}(a,0))-\int_{-a\pi}^{a\pi} f\big ( \frac u
a \big ) \times|\widehat \psi (u)|^2\, du \Big | &\leq &
2\, \sup_{u\in \R} |\widehat \psi (u) | C_{\psi}\, \frac 1 {a^{k}} \int _{-a\pi}^{a\pi} (1+|u|^k) \, f\big (
\frac  u a \big ) |\widehat \psi(u)|^2\, du
%\\  &\leq &2 \cdot C^2_{\beta,L} \, \frac 2 {a^{2\beta-3}}\int _{0}^{\pi} f(v )\, dv,
\end{eqnarray}
Under Assumption $\Psi(k)$ for any $k\geq 1$, $\sup_{u\in \R} (1+u^n)|\widehat \psi (u)| <\infty$ for all $n
\in \N$. Therefore, since there exists $c_a>0$ satisfying $f(\lambda)\leq c_a \lambda^{-2d}$ for all $\lambda\in [-\pi,\pi]$,  for all $d<1/2$,
$$
\int _{-a\pi}^{a\pi} (1+|u|^k) \, f\big (
\frac  u a \big ) |\widehat \psi(u)|^2\, du \leq \Big ( c_a\int_{-\infty}^\infty (1+|u|^k) u^{-2d} |\widehat \psi(u)|^2\, du\Big )\, a^{2d},
$$
and thus  there exists $C>0$ (not depending on $a$) such that for $a$ large enough,
\begin{eqnarray}\label{Fourier2}
\Big |\E (e^{2}(a,0))-\int_{-a\pi}^{a\pi} f\big ( \frac u a \big )
\times|\widehat \psi (u)|^2\, du \Big | &\leq &C\, a^{2d-k}.
\end{eqnarray}
Following the same reasoning, for any $n \geq 0$, there exists $C(n)>0$ (not depending on $a$) such that for $a$ large enough,
\begin{eqnarray}\label{Fourier3}
\Big |\int_{-a\pi}^{a\pi} f\big ( \frac u a \big )
\, |\widehat \psi (u)|^2\, du -\int_{-\infty}^{\infty} f\big ( \frac u a \big )
\, |\widehat \psi (u)|^2\, du\Big | \leq C(n)\, a^{-n}.
\end{eqnarray}
Finally, from Assumption A$(d,d')$, we obtain the following expansion:
\begin{eqnarray}
\nonumber \int_{-\infty}^{\infty} f\big ( \frac u a \big )
\, |\widehat \psi (u)|^2\, du &= &2\pi \, \int_{-\infty}^{\infty} \big (c_d( \frac u a \big )^{-2d}+c_{d'}( \frac u a \big )^{d'-2d}+( \frac u a \big )^{d'-2d}\varepsilon(\frac u a) \big ) 
 \,|\widehat \psi (u)|^2\, du\\
\label{Fourier4}&= &2\pi \, c_d  \,  K_{(\psi,2d)}\, a ^{-2d}+2\pi \, c_{d'}\, K_{(\psi,2d-d')}\, a ^{2d-d'}+o(a ^{2d-d'})
 \end{eqnarray}
using the definition (\ref{Kpsi}) of $K_{(\psi,\alpha)}$ and because $\lim_{\lambda \to 0} \varepsilon(\lambda)=0$ and applying Lebesgue Theorem. 
Then, using (\ref{Fourier2}), (\ref{Fourier3}) and (\ref{Fourier4}), we obtain that 
\begin{eqnarray}\label{inegdetal}
\Big |\E (e^{2}(a,0))-2\pi \, c_d  \,  K_{(\psi,2d)}\, a ^{-2d}+2\pi \, c_{d'}  \, K_{(\psi,2d-d')}\, a ^{2d-d'}\Big |  & \leq & o(a ^{2d-d'})+ C\, a^{2d-k}.
\end{eqnarray}
When $k> d'$, it implies (\ref{equiD'}).  
\end{proof}
\begin{proof}[Proof of Theorem \ref{tlclog}]
We decompose this proof in $4$ steps. First define the normalized  wavelet coefficients of $X$ by:
\begin{equation}\label{dtilde}
\widetilde{e}_N(a,b):=\frac{e(a,b)}{\sqrt {\E e^2(a,1)}} \quad \mbox{for $a\in \N^*$ and $b\in \Z$,}
\end{equation}
and the normalized sample variance of wavelet coefficients:
\begin{eqnarray} \label{samplevar2}
\widetilde{T}_{N}(a):=\frac{1}{N-a}\sum_{k=1}^{N-a}\widetilde{e}^{2}(a,k).
\end{eqnarray}
\\{\bf Step 1}
We prove in  this part that 
$\displaystyle N  \Cov (\widetilde T_N(r\,a_N), \widetilde
T_N(r'\,a_N))$  converges to the asymptotic covariance matrix $\Gamma(\ell_1,\cdots,r_\ell,\psi,d)$ defined in (\ref{cov}).  First for $\lambda \in \R$, denote 
$$
S_a(\lambda):= \frac 1 a \, \sum_{t=1}^{a}\psi(\frac{t}{a})e^{i\lambda t/a}.
$$
Then for $a\in \N^*$ and $b=1,\cdots,N-a$, since $\psi$ is $(0,1])$-supported function,
\begin{eqnarray}
\sum_{t=1}^{N}\alpha(t-s)\psi(\frac{t-b}{a})&=& \sum_{t=0}^{a} \psi\big (\frac {t}a\big) \int_{-\pi}^{\pi}\widehat \alpha (\lambda)e^{i\lambda (t-s+b)}d\lambda \nonumber\\
&=&\int_{-\pi}^{\pi} a S_a(a\lambda) \widehat \alpha (\lambda) e^{i(b-s)\lambda} d\lambda\nonumber \\
&=&\int_{-a\pi}^{a\pi} S_a(\lambda) \widehat \alpha (\frac \lambda a) e^{i(b-s)\frac \lambda a } d\lambda. \label{trans}
\end{eqnarray} 
Thus,
\begin{eqnarray}
\nonumber \Cov (\widetilde T_N(a), \widetilde T_N(a'))&= &\frac 1
{N-a}\frac 1 {N-a'} \sum_{b=1}^{N-a}
\sum_{b'=1}^{N-a'}\Cov(\widetilde{e}^2(a,b),\widetilde{e}^2(a',b')) \\
\label{covSN} & \simeq & \frac {(a\,a')^{-2d} (c_d \, K_{(\psi,2d)} )^{-2}}
{4\pi^2(N-a)(N-a')} \sum_{b=1}^{N-a}
\sum_{b'=1}^{N-a'} \Cov({e}^2(a,b),{e}^2(a',b')).
\end{eqnarray}
But,
\begin{eqnarray}
\nonumber \Cov({e}^{2}_{(a,b)},{e}^{2}_{(a',b')}) \hspace{-3mm} &=&\hspace{-3mm}\frac 1 {a \, a'} \sum_{t_1,t_2,t_3,t_4=1}^N \sum_{s_1,s_2,s_3,s_4 \in \Z}
\Big (\prod_{i=1}^2 
\alpha(t_{i}-s_{i})\psi(\frac{t_{i}-b}{a}) \Big ) \Big (\prod_{i=1}^2 
\alpha(t_{i}-s_{i})\psi(\frac{t_{i}-b'}{a'}) \Big ) \Cov\big(\xi_{s_{1}}\xi_{s_{2}},\xi_{s_{3}}\xi_{s_{4}}\big)\\
&= & C_1 +C_2,
\end{eqnarray}
since there are only two nonvanishing cases: $s_1=s_2=s_3=s_4$ (Case 1 $=>C_1$), $s_1=s_3 \neq s_2=s_4$  and  $s_1=s_4 \neq s_2=s_3$ (Case 2 $=>C_2$).\\
* {\it Case 1:} in such a case, $\Cov\big(\xi_{s_{1}}\xi_{s_{2}},\xi_{s_{3}}\xi_{s_{4}}\big)=\mu_4-1$ and 
\begin{eqnarray*}
\nonumber
C_1 \hspace{-3mm}&=& \hspace{-3mm}\frac{ \mu_4-1} {a \, a'} \sum_{s\in \Z} \Big |\sum_{t=1}^N 
\alpha(t-s)\psi(\frac{t-b}{a}) \Big |^2  \Big | \sum_{t=1}^N 
\alpha(t-s)\psi(\frac{t-b'}{a'}) \Big |^2 \\
C_1 \hspace{-3mm} &= & \hspace{-3mm} (\mu_4-1)\, a \, a'\, \lim_{M\to \infty} \int_{[-\pi,\pi]^4}\hspace{-0.7cm}d\lambda d\lambda' d\mu d\mu' e^{i
 [b(\lambda-\lambda')+b'(\mu-\mu')]} \\
&& \hspace{3cm}\times \sum_{s=-M}^M e^{is [(\lambda-\lambda')+(\mu-\mu')]}S_{a}(a\lambda)\widehat \alpha (\lambda)\overline{S_{a}(a\lambda')}\overline{\widehat \alpha (\lambda')}S_{a'}(a'\mu)\widehat \alpha (\mu)\overline{S_{a'}(a'\mu')}\overline{\widehat \alpha (\mu')}
\end{eqnarray*}
using the relation (\ref{trans}). From usual asymptotic behavior of Dirichlet kernel, for $g \in {\cal C}_{2\pi}^1((-\pi,\pi))$, $\displaystyle\lim_{M\to \infty} \int_{-\pi}^{\pi} D_M(z)g(x+z)dz=g(x)$ uniformly in $x$ with $\displaystyle D_M(z)=\frac 1 {2\pi} \frac {\sin\big ((2M+1)z/2\big)}{\sin\big (z/2\big)}=\frac 1 {2\pi} \sum_{k=-M}^M e^{ikz}$. Therefore with $h$ a ${\cal C}^1$ function $2\pi$-periodic for each component,
$$\lim_{M\to \infty} \int_{[-\pi,\pi]^4}\hspace{-0.7cm} 2\pi \, D_M((\lambda-\lambda')+(\mu-\mu')) h(\lambda,\lambda',\mu,\mu')d\lambda d\lambda' d\mu d\mu'=2\pi \, \int_{[-\pi,\pi]^3}\hspace{-0.7cm} h(\lambda'-\mu+\mu',\lambda',\mu,\mu')d\lambda' d\mu d\mu';
$$
Therefore,
\begin{multline}\label{C1}
C_1 =  2\pi \, (\mu_4-1) \,a \, a'\,  \int_{[-\pi,\pi]^3}\hspace{-0.7cm}d\lambda' d\mu d\mu' e^{i(\mu-\mu')
 (b'-b)} \\
\times S_{a}(a(\lambda'-\mu+\mu'))\widehat \alpha (\lambda'-\mu+\mu')\overline{S_{a}(a\lambda')}\overline{\widehat \alpha (\lambda')}S_{a'}(a'\mu)\widehat \alpha (\mu)\overline{S_{a'}(a'\mu')}\overline{\widehat \alpha (\mu')}.
\end{multline}
* {\it Case 2:} in such a case, with $s_1\neq s_2$, $\Cov\big(\xi_{s_{1}}\xi_{s_{2}},\xi_{s_{1}}\xi_{s_{2}}\big)=1$ and 
\begin{eqnarray*}
\nonumber
C_2\hspace{-3mm} &=& \hspace{-3mm}\frac 2 {a \, a'} \sum_{(s,s')\in \Z^2,s\neq s'} \sum_{t_1=1}^N 
\alpha(t_1-s)\psi(\frac{t_1-b}{a})\sum_{t_2=1}^N \alpha(t_2-s)\psi(\frac{t_2-b'}{a'})   \sum_{t_3=1}^N 
\alpha(t_3-s')\psi(\frac{t_3-b}{a}) \sum_{t_4=1}^N \alpha(t_4-s')\psi(\frac{t_4-b'}{a'})\\
&=& \hspace{-3mm}-\frac {2C_1}{\mu_4-1}+\frac 1 {a \, a'} \sum_{(s,s')\in \Z^2} \sum_{t_1=1}^N 
\alpha(t_1-s)\psi(\frac{t_1-b}{a})\sum_{t_2=1}^N \alpha(t_2-s)\psi(\frac{t_2-b'}{a'})   \sum_{t_3=1}^N 
\alpha(t_3-s')\psi(\frac{t_3-b}{a}) \sum_{t_4=1}^N \alpha(t_4-s')\psi(\frac{t_4-b'}{a'})\\
C_2 \hspace{-3mm} &= & \hspace{-3mm}-\frac {2C_1}{\mu_4-1}+2 \,a \, a'\,  \lim_{M\to \infty}\lim_{M'\to \infty} \int_{[-\pi,\pi]^4}\hspace{-0.7cm}d\lambda d\lambda' d\mu d\mu' e^{i
 [b(\lambda- \mu)-b'(\lambda'-\mu')]} \\
&& \hspace{3cm}\times \sum_{s=-M}^M \sum_{s=-M'}^{M'}e^{is (\lambda'-\lambda)+is'(\mu'-\mu)}S_{a}(a\lambda)\widehat \alpha (\lambda)\overline{S_{a'}(a'\lambda')}\overline{\widehat \alpha (\lambda')}S_{a}(a\mu)\widehat \alpha (\mu)\overline{S_{a'}(a'\mu')}\overline{\widehat \alpha (\mu')}\\
 &= & \hspace{-3mm}-\frac {2C_1}{\mu_4-1}+ 8\pi^2  \,a \, a'\, \int_{[-\pi,\pi]^2}\hspace{-0.7cm} e^{i(\lambda-\mu)
 (b-b')} S_{a}(a\lambda) \overline{S_{a'}(a'\lambda)} S_{a}(a\mu)\overline{S_{a'}(a'\mu)} \, \times \big |\widehat \alpha (\lambda)\big |^2 \, \big |\widehat \alpha (\mu)\big |^2 d\lambda d\mu ,
\end{eqnarray*}
using the asymptotic behaviors of two Dirichlet kernels.\\
Now we have to compute $\displaystyle \sum_{b=1}^{N-a}
\sum_{b'=1}^{N-a'} (C_1+C_2)$. In both cases ($C_1$ and $C_2$), one again obtains a function of a Dirichlet kernel:
\begin{eqnarray}\label{FN}
F_N(a,a',v):=\sum_{b=1}^{N-a} \sum_{b'=1}^{N-a'}e^{i\, v\, (b-b')} =e^{iv(a-a')/2}\frac{\sin((N-a)v/2)\, \sin((N-a')v/2)}{\sin^2(v/2)}.
\end{eqnarray}
For a continuous function $h:[-\pi, \pi] \mapsto \R$, 
$$
 \lim_{N \to \infty} \frac 1 {N} \int_{-\pi}^ \pi h(v) F_N(a,a',v)dv=\lim_{N \to \infty} \frac 1 {N^2} \int_{-\pi N}^ {\pi N} h(\frac v N) F_N(a,a',\frac v N )dv=4h(0)\, \int_{-\infty}^\infty \frac {\sin^2(v/2)}{v^2}dv=2\pi h(0),
$$ thanks to Lebesgue Theorem and with $a/N\to 0$ ($N\to 0$). Then, from (\ref{C1}),
%Now we may use Lemma \ref{diri} which specifies the asymptotic properties of integral of $F_N$. Therefore, since $a=r\, a_0$ and $a'=r' \, a_N$ with $\gcd(r,r')=1$ and $a_N \to \infty$,
\begin{eqnarray}
\nonumber N \, \frac 1 {N-a} \frac 1 {N-a'}
\sum_{b=1}^{N-a} \sum_{b'=1}^{N-a'} C_1\hspace{-3mm} &\sim & \hspace{-3mm} 4\pi^2 \, (\mu_4-1)  aa'  \int_{[-\pi,\pi]^2}\hspace{-0.7cm}d\lambda' d\mu' |S_{a}(a\lambda')|^2\, |S_{a'}(a'\mu')|^2\,| \widehat \alpha (\lambda')|^2 | \widehat \alpha (\mu')|^2 \\
\nonumber  &\sim  & \hspace{-3mm} 4\pi^2 \, (\mu_4-1)  \int_{-a\pi}^{a\pi} \hspace{-0.2cm}|S_{a}(\lambda)|^2\,| \widehat \alpha (\lambda/a)|^2 d\lambda  \int_{-a'\pi}^{a'\pi} \hspace{-0.2cm}|S_{a}(\mu)|^2\,| \widehat \alpha (\mu/a')|^2 d\mu \\
\label{casC12}  && \hspace{-3cm} \Longrightarrow N \,\frac {(aa')^{-2d}(c_dK_{(\psi,2d)})^{-2} } { 4\pi^2(N-a)(N-a')}
\sum_{b=1}^{N-a} \sum_{b'=1}^{N-a'} C_1  \limiteN  (\mu_4-1) \\
\label{casC1}  && \hspace{-3cm} \mbox{and}\qquad \frac N {a_N}\,\frac {(ra_Nr'a_N)^{-2d}(c_dK_{(\psi,2d)})^{-2} } { 4\pi^2(N-ra_N)(N-r'a_N)}
\sum_{b=1}^{N-ra_N} \sum_{b'=1}^{N-r'a_N} C_1  \limiteN  0 ,
\end{eqnarray}
using the same arguments than in Property \ref{cor1} since $a_N\to \infty$ (and therefore $a\to \infty$ and $a'\to \infty$). \\
Moreover, if we consider that $a=ra_N$ and $a'=r'a_N$,
\begin{eqnarray*}
N \, \frac 1 {N-a} \frac 1 {N-a'} \sum_{b=1}^{N-a} \sum_{b'=1}^{N-a'} C_2\hspace{-3mm} &\sim & \hspace{-3mm} 16\pi^3 aa'   \int_{-\pi}^\pi \hspace{-2mm}    \big |S_{a}(a\lambda) \big |^2  \big | S_{a'}(a'\lambda)\big |^2  \big |\widehat \alpha (\lambda)\big |^4d\lambda    -\frac {2N}{\mu_4-1} \frac 1 {N-a} \frac 1 {N-a'} \sum_{b=1}^{N-a} \sum_{b'=1}^{N-a'} C_1 \\
 && \hspace{-38mm} \sim16\pi^3 rr'a_N   \int_{-a_N\pi}^{a_N\pi} \hspace{-2mm}    \big |S_{ra_N}(r\lambda) \big |^2  \big | S_{r'a_N}(r'\lambda)\big |^2  \big |\widehat \alpha (\lambda/a_N)\big |^4 d\lambda    -\frac {2N}{\mu_4-1} \frac 1 {N-ra_N} \frac 1 {N-r'a_N} \sum_{b=1}^{N-ra_N} \sum_{b'=1}^{N-r'a_N} C_1 \\ 
&& \hspace{-3.9cm} \Longrightarrow \frac N {a_N}  \,  \frac {(r\, r'\, a_N^2)^{-2d}(c_dK_{(\psi,2d)})^{-2} } {4\pi^2(N-ra_N)(N-r'a_N)}
\sum_{b=1}^{N-ra_N} \sum_{b'=1}^{N-r'a_N} C_2  \limiteN    4\pi \frac {(rr')^{1-2d}}{K^2_{(\psi,2d)} }\int_{-\infty}^{\infty} \hspace{-2mm}   \frac{ \big |\widehat \psi (r\lambda) \big |^2  |\widehat \psi (r'\lambda) \big |^2}{\lambda^{4d}} d\lambda, 
 \end{eqnarray*}
always using the same trick than in Property \ref{cor1} since $a\to \infty$ and $a'\to \infty$.
Therefore, with (\ref{casC1}), one deduces that:
\begin{eqnarray}\label{limitcov}
\frac N {a_N} \, \Cov (\widetilde T_N(r\,a_N), \widetilde
T_N(r'\,a_N)) \limiteN 4\pi \frac {(rr')^{1-2d}}{K^2_{(\psi,2d)} }\int_{-\infty}^{\infty} \hspace{-2mm}   \frac{ \big |\widehat \psi (r\lambda) \big |^2  |\widehat \psi (r'\lambda) \big |^2}{\lambda^{4d}} d\lambda.
\end{eqnarray}
Note that if $r=r'$ then $\displaystyle \frac N {r\, a_N} \, \Var (\widetilde T_N(r\,a_N)) \limiteN \sigma_\psi^2(d)=64\pi^5\, \frac {K_{(\psi *\psi,4d)}}{K^2_{(\psi,2d)} }$ only depending on $\psi$ and $d$. \\
~\\
{\bf Step 2} We prove here that if the distribution of the innovations $(\xi_t)_t$ is such that there exists $r>0$ satisfying $\E \big (e^{r\xi_0}\big )\leq \infty$ (condition so-called the Cramèr condition), then for any $a\in \N^*$, $ (\widetilde T_N(r_i\,a_N))_{1\leq i \leq \ell}=\Big (\frac 1 {N-r_ia_N} \, \sum_{k=1}^{N-r_ia_N} \widetilde e^2(r_ia_N,k)\Big )_{1\leq i \leq \ell}$ satisfies a central limit theorem. Such theorem is implied by the proof that $\sqrt {\frac N {a_N}} \,   \sum _{i=1}^\ell \frac {u_i} {N-r_ia_N} \, \sum_{k=1}^{N-r_ia_N} \widetilde e^2(r_ia_N,k)$ asymptotically follows a Gaussian distribution for any  $(u_i)_{1\leq i \leq \ell}\in \R^\ell$. For establishing this result we are going to adapt a proof of Giraitis (1985) which shows central limit theorems for function of linear processes using a decomposition with Appell polynomials. Indeed we specified that $X$ is a two-sided linear process and therefore martingale type results as in Wu (2002) or Furmanczyk (2007) can not be applied. Moreover, since $a_N$ is a sequence depending on $N$ it is required to prove a central limit theorem for triangular arrays. Unfortunately the recent paper of Roueff and Taqqu (2009) dealing with central limit theorems for arrays of decimated linear processes, and which can be applied to establish a multidimensional central limit for the variogram of wavelet coefficients associated to a multi-resolution analysis can not be applied here because in this paper this variogram is defined as in (\ref{samplevarusual}) with coefficients taken every $n/n_j$ ($\simeq a_N$ with our notation) and the mean of $n_j$ ($N/a_N$ with our notation) coefficients is considered (and the convergence rate is $\sqrt{n_j}$). Our definition of the wavelet coefficient variogram  (\ref{samplevar}) is an average of $N-a_N$ terms and the convergence rate is $N/a_N$. Then we chose to adapt the results and method of Giraitis (1985). \\
More precisely, consider the case $\ell=1$. For $a>0$, $(\widetilde e(a,b))_{1\leq b \leq N-a}$ is a stationary linear process satisfying assumptions of the paper of Giraitis (called $X_t$ in this article). Now we consider $H_2(x)=x^2-1$ the second-order Hermite polynomial and would like to prove that 
$$ 
\Big ( \frac N {a_N} \Big )^{1/2} \,\frac 1 {N-a_N}  \sum_{b=1}^{N-a_N}\big ( \widetilde e^2(a_N,b)-1 \big ) \simeq  \Big ( \frac 1  {Na_N} \Big )^{-1/2} \, \sum_{b=1}^{N-a_N} H_2(\widetilde e(a_N,b)) \limiteloiN {\cal N}\big (0,\sigma_\psi^2(d) \big ).
$$
Now since the distribution of $\xi_0$ is supposed to satisfy the Cramèr condition, following the proof of Proposition 6 (Giraitis, 1985), define $S_N^{(n)}=\sum_{b=1}^{N-a_N} A_n^{(a_N)}(\widetilde e(a_N,b))$ where $A_n^{(a_N)}$ is the Appell polynomial of degree $n$ corresponding to the probability distribution of $\widetilde e(a_N,\cdot)$. We are going to prove that the cumulants of order $k\geq 3$ are such as
\begin{equation}\label{cum}
\chi \big (S_N^{(n(1))},\ldots, S_N^{(n(k))}\big) =o\big ( ( N a_N ) ^{k/2} \big)
\end{equation}  
for any $n(1),\cdots, n(k)\geq 2$ (the computation of the cumulant of order $2$ is induced by Step 1 of this proof). Indeed, $  \chi \big (S_N^{(n(1))},\ldots, S_N^{(n(k))}\big)=\sum_{\gamma \in \Gamma_0(T)} d_\gamma I_\gamma(N)$ where $\Gamma_0(T)$ is the set of possible diagrams and the definition of $I_\gamma(N)$ is provided in (34) of Giraitis (1985). \\
In the case of Gaussian diagrams, $I_\gamma(N)=o\big ((Na_N) ^{k/2} \big)$, since this case is induced by the Gaussian case and the second order moments. %which implies a central limit theorem (see Bardet {\it et al.}, 2008).  
\\
If $\gamma$ is a non Gaussian diagram, {\em mutatis mutandis}, we are going to follow the notation and proof of Lemma 2 of Giraitis (1985). Note first 
from Step 1, we can write:
\begin{eqnarray}\label{tilde}
\widetilde e(a,b)=\sum_{s\in \Z}  \, \beta_a(b-s)\,\xi_s\quad \mbox{with}\quad \beta_a(s)=\frac {\sqrt a}{\sqrt{\E e^2(a,b)}} \, \int_{-\pi}^\pi S_a(a\lambda) \widehat \alpha(\lambda) e^{i\lambda s} d\lambda.
\end{eqnarray}
Then for $u\in [-\pi,\pi]$,
\begin{eqnarray*}
\widehat \beta_a(u)&=&\frac 1 {2\pi} \sum_{s=-\infty}^\infty \beta_a(s) e^{-isu}\\
&=&\frac {\sqrt a}{2\pi \sqrt{\E e^2(a,b)}}\lim_{m\to \infty} \int_{-\pi}^\pi   \sum_{s=-m}^m S_a(a\lambda) \widehat \alpha(\lambda) e^{is(\lambda-u)} d\lambda\\
&=&\frac {\sqrt a}{ \sqrt{\E e^2(a,b)}} \, S_a(au) \widehat \alpha(u), 
\end{eqnarray*}
with the asymptotic behavior of Dirichlet kernel. Now, in case a/ of Lemma 2 of Giraitis (1985), consider the diagram $V_1=\{(1,1),(2,1),(3,1)\}$ and assume that for the rows $L_j$ of the array $T$,  $j=1,\cdots,k$ ($k\geq 3$),  $|V_1 \cap L_j|\geq 1$ for at least $3$ different rows $L_j$. Then the inequality (39) can be repeated, and on the hyperplane $x_{V_1}$, a part of the integral (34) provides
$$\Big | \int _{\{x_{11}+x_{21}+x_{31}=0\}\cap [-\pi,\pi]^3} \hspace{-2cm} dx_{11}dx_{21}dx_{31}\, \prod _{j=1}^3 D_N((x_{j1}+\cdots+x_{jn(j)})\widehat \beta_a(x_{j1}) \Big |  \leq C \, \alpha_1(u_1)\alpha_2(u_2)\alpha_3(u_3),$$
with $u_i=x_{i2}+\cdots+x_{in(i)}$ and the same expressions of $\alpha_i$ provided in Giraitis (1985).  
It remains to bound $\alpha_i(u)$. But, with the same approximations as in the proof of Property \ref{cor1}, for $a$ and $N$ large enough
\begin{eqnarray*}
\alpha_1(u)&=&\int_{-\pi}^\pi \big |\widehat \beta_a(u)\, D_N(x+u)\big |dx \sim \sqrt{2\pi}\, \frac 1 {\sqrt{a}}\, \int_{-a\pi}^{a\pi}\big |  \frac {\widehat \psi(x)}{|x|^d}\big | \, \big |D_N\big (\frac x a +u\big )\big |du \\
& \leq & 2 \sqrt{a} \sup_{x\in \R} \Big \{  \frac {|\widehat \psi(x)|}{|x|^d} \Big \} \, \int_{-\pi}^\pi |D_N(x+u)|dx \\
 &\leq & 2 C  \sup_{x\in \R}  \Big \{ \frac {|\widehat \psi(x)|}{|x|^d} \Big \}\,  \sqrt{a} \, \log N,
\end{eqnarray*}
since there exists $C>0$ such as $\int_{-\pi}^\pi |D_N(x+u)|dx \leq C \log N$ for any $u \in [-\pi,\pi]$. Now for $i=2,3$, $a$ and $N$ large enough,
\begin{eqnarray*}
\alpha^2_i(u)&= & \|\widehat \beta_{a_N}(\cdot )\, D_N(u+\cdot) \|^2_2 \\
&\leq & 2 \int_{-a_N\pi}^{a_N\pi}  \frac {|\widehat \psi(x)|^2 }{|x|^{2d}} D^2_N\big (\frac x {a_N} +u\big )du \\
&\leq & 2 C  \sup_{x\in \R}  \Big \{\frac {|\widehat \psi(x)|^2}{|x|^{2d}}\Big \} \, a_N  \, \int_{-\pi}^\pi |D^2_N(x+u)|dx \\
& \leq &  C'  \sup_{x\in \R}  \Big \{\frac {|\widehat \psi(x)|^2}{|x|^{2d}}\Big \}\, N a_N.
\end{eqnarray*}
Then $ \alpha_1(u_1)\alpha_2(u_2)\alpha_3(u_3) = o( (Na_N)^{3/2})$. \\
For the $k-3$ other terms, a result corresponding to Lemma 1 of Giraitis (1985) can also be obtained. Indeed, for $a$ and $N$ large enough,
\begin{eqnarray*}
\| g_{N,j}\|_2^2 & =& \int_{[-\pi,\pi]^{n(j)}} dx \,D^2_N(x_1+\cdots+ x_{n(j)} ) \, \prod_{i=1}^{n(j)} |\widehat \beta_a(x_i)|^2 \\
&  \leq &C \,  \int_{[-a\pi,a\pi]^{n(j)}} dx \,D^2_N(\frac 1 a (x_1+\cdots+ x_{n(j))} \prod_{i=1}^{n(j)} \frac {|\widehat \psi(x_i)|^2 }{|x_i|^{2d}}  \\
&\leq & C \, \big |\sup_{x\in \R}  \big \{\frac {|\widehat \psi(x)|^2}{|x|^{2d}}\big \}\big |^{n(j)} \, a \,  \| D_N\big (\cdot \big )\|^2_2 \\
& \leq &  C' \,N a_N
\end{eqnarray*}
with $C'\geq 0$ not depending on $N$ and $a_N$. Thus $\| g_{N,j}\|_2 \leq C\, (Na_N)^{1/2}$ with $C\geq 0$. Using the same reasoning, there also exists $C'\geq 0$ such as $\| g'_{N,j}\|_2 \leq C\, (Na_N)^{1/2}$ for $j\geq 2$ while $\| g'_{N,1}\|_2=O(\sqrt {a_N}\,\log N)=o((Na_N)^{1/2})$. As a consequence, for $\gamma$ such as $|V_1 \cap L_j|\geq 1$ for at least $3$ different rows $L_j$, and more generally with $|V_1|\geq 3$,
\begin{equation} \label{majoIN}
I_\gamma(N)=o\big ((Na_N) ^{k/2} \big).
\end{equation}
For other $\gamma$, it remains to bound the function $h(u_1,u_2)$ defined in Giraitis (1985, p. 32) as follows (with $x=x_{11}+x_{12}$) and with $u_1+u_2\neq 0$:
\begin{eqnarray*}
h(u_1,u_2) & = & \Big ( \int_{-\pi}^{\pi} \big | \widehat \beta_{a_N}(-x )\, D_N(u_1+x)\, D_N(u_2-x) \big |dx  \Big )\, \Big (\int _{-\pi}^{\pi} \big | \widehat \beta_{a_N}(x )\big |^2dx \Big ) \\
& \leq & \big |\sup_{x\in \R}  \big \{\frac {|\widehat \psi(x)|^2}{|x|^{2d}}\big \}\big |  \, a_N \,   \Big ( \int_{-\pi}^{\pi}  \big |D_N\big (u_1+ x  \big )\, D_N\big (u_2-x \big ) \big |dx  \Big ) \Big (2\pi \, \int _{-\infty}^{\infty} \frac {|\widehat \psi(x)|^2}{|x|^{2d}} \,dx \Big ) .
%\\ & \leq &\Big ( C \,  \big |\sup_{x\in \R}  \big \{\frac {|\widehat \psi(x)|^2}{|x|^{2d}}\big \}\big | \, \frac {a_N \log N }{|\sin((u_1+u_2)/2)|} \Big ) \Big (2\pi \, \int _{-\infty}^{\infty} \frac {|\widehat \psi(x)|^2}{|x|^{2d}} \,dx \Big ) 
\end{eqnarray*} 
But 
\begin{eqnarray*}
 \int_{-\pi}^{\pi}  \big |D_N\big (u_1+ x  \big )\, D_N\big (u_2-x \big ) \big |dx & \leq & 2 \int_{-2\pi N}^{2\pi N} \Big | \frac {\sin(x) } x \, \frac {\sin (\frac N 2 (u_1+u_2)-x)}{\sin(\frac 1 2 (u_1+u_2)-\frac x N)} \Big |dx \\
& \leq & \left \{ \begin{array}{ll} C \, \log N \big | \sin(\frac 1 2 (u_1+u_2)) \big |^{-1}  & \mbox{if $|u_1+u_2| \geq (N\log N)^{-1}$} \\ 
 C \, N  & \mbox{if $|u_1+u_2|<(N\log N)^{-1}$}
 \end{array} \right .  .
\end{eqnarray*} 
Therefore,
\begin{eqnarray*}
\| h(u_1,u_2)\|_2^2=\int_{[-\pi,\pi]^2} h^2(u_1,u_2)du_1du_2 & \leq  &  C \,a_N^2 \, \Big ( \log^2 N \int_{(N\log N)^{-1}}^\pi (\sin x)^{-2}\,dx + N^2 \, \int_0^{(N\log N)^{-1}} dx \Big ) \\
& \leq & C \,a_N^2 \,\big ( N \log^3 N + N \log N \big ),
\end{eqnarray*} 
and hence $\| h(u_1,u_2)\|_2=o(N a_N)$. Finally, (\ref{majoIN}) holds for all $\gamma$ and it implies (\ref{cum}). \\
If $\ell>1$, the same proof can be repeated from the linearity properties of cumulants. Thus, $ (\widetilde T_N(r_i\,a_N))_{1\leq i \leq \ell}$ satisfies the following central limit:
\begin{eqnarray}\label{tlcnorm}
\sqrt{ \frac N {a_N}} \, \big ( \widetilde T_N(r_i\,a_N)-1\big )_{1\leq i \leq \ell} \tend {\cal N} \big ( 0 \, , \, \Gamma(r_1,\cdots,r_\ell,\psi,d)\big ),
\end{eqnarray}
with $\Gamma(r_1,\cdots,r_\ell,\psi,d)=(\gamma_{ij})_{1\leq i,j\leq \ell}$ given in (\ref{cov}). \\
~\\
{\bf Step 3} Now we extend the central limit obtained in Step 2 for linear processes with an innovation distribution satisfying a Cramèr condition ($\E \big (e^{r\xi_0}\big )<\infty$) to the weaker condition $\E \xi^4_0<\infty$ using a truncation procedure. Thus assume now that $\E \xi_0^4<\infty$. Let $M>0$ and define $\xi_t^{-}=\xi_t \, \I_{|\xi|\leq M}$ and $\xi_t^+=\xi_t \, \I_{|\xi|> M}$, $\widetilde e^-(a,b)=\sum_{s\in \Z}  \, \beta_a(b-s)\,\xi_s^- $ and $\widetilde e^+(a,b)=\sum_{s\in \Z}  \, \beta_a(b-s)\,\xi_s^+$ using (\ref{tilde}). Clearly $\widetilde e(a,b)=\widetilde e^+(a,b)+\widetilde e^-(a,b)$. We are going to prove that (\ref{tlcnorm}) holds. For this, we begin by writing
\begin{eqnarray}\label{decompo}
 \widetilde T_N(r_i\,a_N)-1&=& \frac 1 {N-r_ia_N}\Big ( \sum_{b=1}^{N-r_ia_N}\big (\widetilde e^-(r_ia_N,b)\big )^2 -1\big)-2 \widetilde e^+(r_ia_N,b) \widetilde e^-(r_ia_N,b)+\big (\widetilde e^+(r_ia_N,b)\big )^2\Big )~~~~ ~~~~
\end{eqnarray}
We first prove that $\big (\widetilde T_N^-(r_i\,a_N)-1\big)_{1\leq i \leq \ell}=\big (\frac 1 {N-r_ia_N}\, \sum_{b=1}^{N-r_ia_N}\big (\widetilde e^-(r_ia_N,b)\big )^2 -1\big)_{1\leq i \leq \ell}$ also satisfies (\ref{tlcnorm}). Indeed, $(\widetilde e^-(r_ia_N,b))$ is a linear process with innovations $(\xi_t^{-})$ satisfying the Cramèr condition and it is obvious that $\Big (\frac {\E \big (\widetilde e(r_ia_N,b)\big )^2}{\E \big (\widetilde e^-(r_ia_N,b)\big )^2}\Big )^{1/2}\, \widetilde e^-(r_ia_N,b)_{b,i}$ has exactly the same distribution than $\widetilde e(r_ia_N,b)_{b,i}$. Therefore it remains to prove that $\sqrt {\frac{N}{a_N}}\, \Big (\frac {\E \big (\widetilde e(r_ia_N,b)\big )^2}{\E \big (\widetilde e^-(r_ia_N,b))^2}-1 \Big )$ converges to $0$. We have $\E \big (\widetilde e(r_ia_N,b))^2=\big (\sum_{s\in \Z}  \, \beta^2_a(s)\big )\,\E (\xi_0)^2=1$ and $\E \xi_0^2=1$ (from Property \ref{cor1}). Then
\begin{eqnarray*}
\Big |\frac {\E \big (\widetilde e^-(r_ia_N,b))^2}{\E \big (\widetilde e(r_ia_N,b))^2}-1\Big |&\leq & 2 \, \Big ( \E \big (\widetilde e^+(r_ia_N,b))^2 \Big ) ^{1/2} + \E \big (\widetilde e^+(r_ia_N,b))^2.
\end{eqnarray*}
We have $\E \big (\widetilde e^+(r_ia_N,b))^2=\big (\sum_{s\in \Z}  \, \beta^2_a(s)\big )\,\E (\xi^+_0)^2=\E (\xi^+_0)^2$ from previous arguments and since we assume that the distribution of $\xi_0$ is symmetric. But using Hölder's and Markov's inequalities $\E (\xi^+_0)^2\leq (\E \xi_0^4)^{1/2} (\Pr(|\xi_0|>M))^{1/2} \leq (\E \xi_0^4)\, M^{-2}$. Hence, there exists $C>0$ not depending on $M$ and $N$,
$$
\sqrt {\frac{N}{a_N}}\, \Big |\frac {\E \big (\widetilde e^-(r_ia_N,b))^2}{\E \big (\widetilde e(r_ia_N,b))^2}-1\Big | \leq \frac C M \, \sqrt {N}{a_N} \limiteN 0
$$
when $M=N$ (for instance). Therefore $\big (\widetilde T_N^-(r_i\,a_N)-1\big)_{1\leq i \leq \ell}$ satisfies the CLT (\ref{tlcnorm}). \\
From (\ref{decompo}), it remains to prove that 
$$
\sqrt {\frac{N}{a_N}}\,  \frac 1 {N-r_ia_N}\Big ( \sum_{b=1}^{N-r_ia_N}-2 \widetilde e^+(r_ia_N,b) \widetilde e^-(r_ia_N,b)+\big (\widetilde e^+(r_ia_N,b)\big )^2\Big )\limiteprobaN 0.
$$
From Markov's and Hölder inequalities, this is implied when $\sqrt {\frac{N}{a_N}}\,  \big ( \E \big (\widetilde e^+(r_ia_N,b)\big )^2+2 \sqrt {\E \big (\widetilde e^+(r_ia_N,b)\big )^2} \big ) \limiteN 0$ with $\E \big (\widetilde e^+(r_ia_N,b)\big )^2=1$. Using $\E \big (\widetilde e^+(r_ia_N,b))^2\leq (\E \xi_0^4)\, M^{-2}$ obtained above, we deduce that this statement holds when $M=N$ (for instance). As a consequence, from (\ref{decompo}), the CLT (\ref{tlcnorm}) holds even if the distribution of $\xi_0$ is only symmetric and such that $\E \xi_0^4<\infty$.\\
~\\
{\bf Step 4} It remains to apply the Delta-method to (\ref{tlcnorm}) with function $(x_1,\cdots,x_\ell)\mapsto (\log x_1,\cdots,\log x_\ell)$:
\begin{eqnarray*}
\sqrt{ \frac N {a_N}} \, \big ( \log \big ( T_N(r_i\,a_N)\big )-\log (\E e^2(a_N,1)) \big )_{1\leq i \leq \ell} \tend {\cal N} \big ( 0 \, , \, \Gamma(r_1,\cdots,r_\ell,\psi,d)\big ),
\end{eqnarray*}
With $\E e^2(a_N,1)$ provided in Property \ref{cor1}, we obtain 
$$
\displaystyle \log \E e^2(a_N,1)=2d \, \log (a_N)+\log \big (\frac { c_d K_{(\psi,2d)}} {2\pi}  \big ) +\frac {c_{d'}K_{(\psi,2d-d')}} {2\pi \, a_N^{d'}} \big (1+o(1) \big )
$$
Therefore, when $\displaystyle \sqrt{\frac {N}{a_N}} \frac 1 {a_N^{d'}} \limitN 0$, {\it i.e.} $N^{\frac 1 {1+2d'}}=o(a_N)$, the CLT (\ref{CLTSN}) holds.  \end{proof}
\begin{proof}[Proof of Theorem \ref{tildeD}] Here we use Theorem 1 of Bardet {\it et al.} (2008) where it was proved that the CLT (\ref{CLTSN}) is still valid when $a_N$ is replaced by $N^{\widetilde \alpha_N}$. Then, since $\widetilde d_N=\widetilde M_N\,  Y_N(\widetilde \alpha_N)$ with $\widetilde M_N=\big (0~1/2 \big )\big (Z_{1}'\widehat \Gamma_N^{-1} Z_{1}\big )^{-1} Z_{1}'\widehat \Gamma_N^{-1}$ we deduce that $\sqrt{N/N^{\widetilde \alpha_N}} \big (\widetilde d_N-d\big)$ is asymptotically Gaussian with asymptotic variance the limit in probability of $\widetilde M_N\,\Gamma(1,\ldots,\ell,d,\psi)\, \widetilde M_N'$, that is $\sigma^2$.\\
The right hand side relation of (\ref{CLTD2}) is also an obvious consequence of Theorem 1 of Bardet {\it et al.} (2008). 
\end{proof}
\begin{proof}[Proof of Theorem \ref{tildeT}] 
The theory of linear models can be applied: $Z_{N^{\widetilde \alpha_N}} \Big( \begin{array}{c} \widetilde  c_N\\ 2\widetilde  d_N\end{array}\Big )$ is an orthogonal projector of $Y_N(\widetilde \alpha_N)$ on a subspace of dimension $2$, therefore $Y_N(\widetilde \alpha_N)-Z_{N^{\widetilde \alpha_N}} \Big( \begin{array}{c} \widetilde  c_N\\ 2\widetilde  d_N\end{array}\Big )$ is an orthogonal projector of $Y_N(\widetilde \alpha_N)$ on a subspace of dimension $\ell-2$. Moreover, using the CLT (\ref{CLTSN}) where $a_N$ is replaced by $N^{\widetilde \alpha_N}$, we deduce that $\sqrt{N/N^{\widetilde \alpha_N}} \widehat \Gamma_N^{-1} Y_N(\widetilde \alpha_N)$ asymptotically follows a Gaussian distribution with asymptotic covariance matrix $I_\ell$ (identity matrix). Hence form Cochran Theorem we deduce (\ref{Testconv}). 
\end{proof}\bibliographystyle{amsalpha}

\end{document}